\title[SMOOTH SOLUTION TO HIGHER DIMENSIONAL COMPLEX
PLATEAU PROBLEM]{SMOOTH SOLUTION TO HIGHER DIMENSIONAL COMPLEX
PLATEAU PROBLEM}
\author[Rong Du]{Rong Du$^{\dag}$}
\address{Department of Mathematics\\
Shanghai Key Laboratory of PMMP\\
East China Normal University\\
Rm. 312, Math. Bldg, No. 500, Dongchuan Road\\
Shanghai, 200241, P. R. China} \email{rdu@math.ecnu.edu.cn}
\thanks{$^{\dag}$ The Research Sponsored by the National Natural Science Foundation of China (Grant No. 11471116, 11531007) and Science and Technology Commission of Shanghai Municipality (Grant No. 13dz2260400).}
\theoremstyle{definition}
\newtheorem{theorem}[subsection]{Theorem}
\newtheorem{lemma}[subsection]{Lemma}
\newtheorem{definition}[subsection]{Definition}
\newtheorem{corollary}[subsection]{Corollary}
\newtheorem{remark}[subsection]{Remark}
\newtheorem{conjecture}[subsection]{Conjecture}
\newfont{\drnew}{wncyr10}
\def\dashfill{\leaders\hbox{\hbox to 3.25pt{\hrulefill}\hspace*{2pt}\hbox to 3.25pt{\hrulefill}}\hfill}
\newcommand{\CITE}[1]{{[#1]}}
\let\cite=\CITE
\begin{document}

\begin{abstract}
Let $X$ be a compact connected strongly pseudoconvex $CR$ manifold
of real dimension $2n-1$ in $\mathbb{C}^{N}$.  For $n\ge 3$, Yau solved the complex
Plateau problem of hypersurface type by checking a bunch of Kohn-Rossi cohomology groups in 1981.
In this paper, we generalize Yau's conjecture on some numerical invariant of every isolated surface singularity defined by Yau and the author to any dimension and prove that the conjecture is true for local complete intersection singularities of dimension $n\ge 3$. As a direct
application, we solved complex Plateau problem of hypersurface type for any dimension $n\ge 3$ by checking only one numerical invariant.
\end{abstract}

\maketitle

\vspace{1cm}
\section{\textbf{Introduction}}
The famous classical complex Plateau problem asks which odd
dimensional real sub-manifolds of $\mathbb{C}^N$ are boundaries of
complex sub-manifolds in $\mathbb{C}^N$. Harvey and Lawson \cite{Ha-La} proved in their beautiful seminal
paper that for any compact
connected $CR$ manifold $X$ of real dimension $2n-1$, where $n\ge 2$, in
$\mathbb{C}^N$, there is a unique complex variety $V$ in
$\mathbb{C}^N$ such that the boundary of $V$ is $X$.

The next question is to determine when $X$ is a boundary of a
complex sub-manifold in $\mathbb{C}^N$, i.e. when $V$ is smooth.
Suppose $X$ is a compact connected strongly pseudoconvex $CR$
manifold of real dimension $2n-1$, $n\ge 3$, in the boundary of a
bounded strongly pseudoconvex domain $D$ in $\mathbb{C}^{n+1}$. In
1981, Yau \cite{Ya} showed  that $X$ is a boundary of the complex
sub-manifold $V\subset D-X$ if and only if Kohn-Rossi cohomology
groups $H^{p, q}_{K R}(X)$ are zeros for $1\le q\le n-2$. So Yau's result solved the classical Plateau problem for hypersurface type as $n\ge 3$.

For $n=2$, i.e. $X$ is a 3-dimensional $CR$ manifold, the intrinsic
smoothness criteria for the complex Plateau problem remains unsolved
for over a quarter of a century even for the hypersurface type. The
main difficulty is that the Kohn-Rossi cohomology groups are
infinite dimensional in this case. Let $V$ be a complex variety with
$X$ as its boundary, then the singularities of $V$ are surface
singularities. In \cite{Lu-Ya}, Luk and Yau proved that if $X$ is a
compact strongly pseudoconvex  Calabi-Yau $CR$ manifold of dimension
$3$ contained in the boundary of a strongly pseudoconvex bounded
domain $D$ in $\mathbb{C}^N$ and the holomorphic De Rham cohomology
$H^2_h(X)$ vanishes, then $X$ is a boundary of a complex variety $V$
in $D$ with boundary regularity and $V$ has only isolated
singularities which are all Gorenstein surface singularities with vanishing
$s$-invariant in the interior. Later, in \cite{Du-Ya}, Yau and the author
introduced a new $CR$ invariant $g^{(1,1)}(X)$ which is intrinsic in term of $X$. This new invariant
gives a necessary and sufficient condition for the variety $V$
bounded by $X$ with the holomorphic De Rham cohomology $H^2_h(X)=0$
being smooth.

From \cite{Du-Ya}, we know that $g^{(1,1)}(X)$ is related to the
corresponding invariants of singularities in the variety which $X$
bounds. The main idea for solving the Plateau problem is to show the
strict positivity of the invariant $g^{(1,1)}$ of each normal
singularity. Yau conjectured that $g^{(1,1)}>0$ for every isolated
normal surface singularity (see \cite{Du-Ga}).

\vspace{.5cm} \textbf{Conjecture:} For each isolated normal surface
singularity, the invariant $g^{(1,1)}$ is strictly positive.
\vspace{.5cm}

In \cite{Du-Ya}, we showed that $g^{(1,1)}(X)>0$ for every isolated
normal singularity with $\mathbb{C}^*$-action. It provides some evidence to make one believe the truth of the conjecture.

For non-hypersurface type, $n\ge 3$, Yau's method doesn't work because his method relies heavily on the Tyurina numbers of hypersurface singularities while they cannot be generalized for arbitrary singularities in an effective way. Moreover, numerical conditions on the boundary $X$ can hardly guarantee the normality of the singularities in the interior which is automatically satisfied for isolated hypersurface singularities though. So it is natural to ask the second best thing that when $X$ is a boundary of a variety $V$ which is smooth after normalization. Recently, Gao, Yau and the author generalized the invariant $g^{(1,1)}$ (resp. $g^{(1,1)}(X)$) to
higher dimension as $g^{(\Lambda^n 1)}$ (resp. $g^{(\Lambda^n 1)}(X)$) and showed that if
$g^{(\Lambda^n 1)}(X)=0$, then the interior has at worst finite
number of rational singularities (\cite{D-G-Y}). In particular, if $X$ is Calabi--Yau of real
dimension $5$, then the vanishing of this invariant is equivalent to
give the interior regularity up to normalization. The main method in \cite{D-G-Y} relies Reid's results on the existence of the explicit resolution of $3$ dimensional Gorenstein terminal and canonical singularities and the crucial point is to show the positivity of the numerical invariant $g^{(\Lambda^3 1)}$ for $3$ dimensional isolated Gorenstein singularities. In \cite{D-G-Y}, we generalized Yau's above conjecture to dimension $3$ and proved it holds for $3$ dimensional isolated Gorenstein singularities.

In this paper we generalize Yau's conjecture to any dimension and prove that it is true for isolated local complete intersection singularities of dimension $n\ge 3$. More explicitly, we show that $g^{(\Lambda^n 1)}$ is strictly positive for
every isolated local complete intersection singularity of dimension $n\ge 3$. The crucial point for studying the numerical invariant $g^{(\Lambda^n 1)}$ is to understand holomorphic $1$-forms on the resolution of isolated singularities. So we transfer the original complex Plateau problem to studying holomorphic $1$-forms or holomorphic vector fields dually on singular varieties. Holomorphic vector fields on singular varieties are interesting in their own right (cf. [S-S1], [S-S2] or [B-S-S]). Our method is considering the vanishing order of holomorphic $1$-forms along some special exceptional component on the resolution manifold of isolated singularities mainly. As a direct
application of the strictly positivity of the numerical invariant $g^{(\Lambda^n 1)}$, we can deal with the complex Plateau problem of hypersurface type for any dimension $n\ge 3$ by checking only one numerical invariant.

\vspace{.5cm}\textbf{Main Theorem 1}  \emph{Let $X$ be a strongly
pseudoconvex compact $CR$ manifold of real dimension $2n-1\ge 5$.
Suppose that $X$ is contained in the boundary of a strongly
pseudoconvex bounded domain $D$ in $\mathbb{C}^{n+1}$. Then $X$ is a
boundary of the complex sub-manifold
$V\subset D-X$ with boundary regularity if and only if
$g^{(\Lambda^n 1)}(X)=0$. }
\vspace{.5cm}

The crucial point for the above theorem is by generalizing Yau's conjecture to higher dimension and proving it is true for isolated local complete intersection singularities of dimension $n\ge 3$.

\vspace{.5cm} \textbf{Conjecture:} For each isolated normal singularity of dimension $n$, the invariant $g^{(\Lambda^n 1)}$ is strictly positive.
\vspace{.5cm}

\vspace{.5cm}\textbf{Main Theorem 2}  \emph{Let $V$ be a $n$-dimensional Stein space with $0$ as its only
local complete intersection singular point, then $g^{(\Lambda^n 1)}\ge 1$. }
\vspace{.5cm}

The proof of theorem relies on Shokurov's minimal discrepancy conjecture (\cite{Sh}). Up to now, we only know it holds for isolated local complete intersection singularities when the dimension of singularity is great than $3$ (see \cite{E-M}, \cite{E-M-Y}). If Shokurov's minimal discrepancy conjecture were true for isolated Gorenstein terminal singularities , then our Main Theorem 2 is also true for isolated Gorenstein singularities of dimension $n\ge 3$ and our Main Theorem 1 holds for Calabi-Yau CR manifold of non-hypersurface type.

\vspace{.5cm} In Section 2, we shall recall some basic definitions of a $CR$
manifold. In Section 3, we survey some results of invariant of singularities and CR invariants introduced in \cite{D-G-Y}. Moreover, we
show the strictly positivity of the invariant $g^{(\Lambda^n 1)}$ for isolated local complete intersection singularities of dimension $n\ge 3$. In Section 4, we solve our Main Theorem 1 in this paper.

\section{\textbf{Strongly pseudoconvex $CR$ manifolds}}
We will recall the basic definition of strongly pseudoconvex $CR$
manifolds.  We recommend \cite{Ta} or the preliminaries in \cite{Du-Ya} for the
details.
\begin{definition}
\emph{Let $X$ be a connected orientable manifold of real dimension
$2n-1$. A $CR$ structure on $X$ is a rank $n-1$ subbundle $S$ of
$\mathbb{C}T(X)$ (complexified tangent bundle) such that
\begin{enumerate}
\item
$S\bigcap \bar{S}=\{0\}$,
\item
If $L$, $L'$ are local sections of $S$, then so is $[L, L']$.
\end{enumerate}}
\end{definition}

\begin{definition}
\emph{Let $L_1,\dots, L_{n-1}$ be a local frame of the $CR$
structure $S$ on $X$ so that $\bar{L}_1,\dots,\bar{L}_{n-1}$ is a
local frame of $\bar{S}$. Since $S\oplus  \bar{S}$ has complex
codimension one in $\mathbb{C}T(X)$, we may choose a local section N
of $\mathbb{C}T(X)$ such that $L_1,\dots, L_{n-1},
\bar{L}_1,\dots,\bar{L}_{n-1}$, $N$ span $\mathbb{C}T(X)$. We may
assume that $N$ is purely imaginary. Then the matrix $(c_{ij})$
defined by
\[
[L_i,
\bar{L}_j]=\sum_ka^k_{i,j}L_k+\sum_kb^k_{i,j}\bar{L}_k+c_{i,j}N
\]
is Hermitian, and is called the Levi form of $X$.}
\end{definition}
\begin{remark}
\emph{The number of non-zero eigenvalues and the absolute value of
the signature of $(c_{ij})$ at each point are independent of the
choice of $L_1,\dots, L_{n-1}, N$.}
\end{remark}
\begin{definition}
\emph{$X$ is said to be strongly pseudoconvex if the Levi form is
positive definite at each point of $X$.}
\end{definition}

\begin{definition}
\emph{Let $X$ be a CR manifold of real dimension $2n-1$. $X$ is said
to be Calabi-Yau if there exists a nowhere vanishing holomorphic
section in $\Gamma(\wedge^n\widehat{T}(X)^*)$, where
$\widehat{T}(X)$ is the holomorphic tangent bundle of $X$.}
\end{definition}

\textbf{Remark}:
\begin{enumerate}
\item
Let $X$ be a $CR$ manifold of real dimension $2n-1$ in
$\mathbb{C}^n$. Then $X$ is a Calabi-Yau $CR$ manifold.
\item
Let $X$ be a strongly pseudoconvex $CR$ manifold of real dimension
$2n-1$ contained in the boundary of bounded strongly pseudoconvex
domain in $\mathbb{C}^{n+1}$. Then $X $ is a Calabi-Yau $CR$
manifold.
\end{enumerate}

\section{\textbf{Invariants of singularities and CR invaraints}}

Let $X$ be a compact connected strongly pseudoconvex $CR$ manifold
of real dimension $2n-1$, in the boundary of a bounded strongly
pseudoconvex domain $D$ in $\mathbb{C}^N$. By a result of Harvey and
Lawson, there is a unique complex variety $V$ in $\mathbb{C}^N$ such
that the boundary of $V$ is $X$. Let $\pi: (M, A_1,\cdots,
A_k)\rightarrow (V, 0_1,\cdots, 0_k)$ be a resolution of all the
singularities with $A_i=\pi^{-1}(0_i)$, $1\le i\le k$, as
exceptional sets.

In order to solve the classical complex Plateau problem, we need to
find some $CR$-invariant which can be calculated directly from the
boundary $X$ and the vanishing of this invariant will give
a smooth solution to complex Plateau problem.

For this purpose, we define a new sheaf $\bar{\bar{\Omega}}_V^{1,
1}$, a new invariant of surface singularities $g^{(1,1)}$ and a new $CR$
invariant $g^{(1,1)}(X)$ in \cite{Du-Ya}.  Recently, we
generalized them to higher dimension for dealing with general complex
Plateau problem (\cite{D-G-Y}).

\begin{definition}
\emph{Let $(V, 0)$ be a Stein germ of an $n$-dimensional analytic
space with an isolated singularity at $0$. Suppose $\bar{\bar{\Omega}}^1_V:=\theta_*\Omega^1_{V\backslash V_{sing}}$
where $\theta : V\backslash V_{sing}\longrightarrow V$ is the
inclusion map and $V_{sing}$ is the singular set of $V$. Define a sheaf of germs
$\bar{\bar{\Omega}}_V^{\Lambda^p 1}$ by the sheaf associated with the
presheaf
\[
U\mapsto \langle\Lambda^p\Gamma(U, \bar{\bar{\Omega}}^1_{V})\rangle,
\]
where $U$ is an open set of $V$ and $2 \le p\le n$.}
\end{definition}

\begin{lemma}(\cite{D-G-Y})\label{loc inv2}
\emph{Let $V$ be an $n$-dimensional Stein space with $0$ as its only
singular point in $\mathbb{C}^N$. Let $\pi: (M, A)\rightarrow (V,
0)$ be a resolution of the singularity with $A$ as exceptional set.
Then $\bar{\bar{\Omega}}_V^{\Lambda^p 1}$ is coherent and there is a
short exact sequence
\begin{equation}
0\longrightarrow\bar{\bar{\Omega}}_V^{\Lambda^p
1}\longrightarrow\bar{\bar{\Omega}}_V^p\longrightarrow\mathscr{G}^{(\Lambda^p
1)}\longrightarrow 0
\end{equation}
where $\mathscr{G}^{(\Lambda^p 1)}$ is a sheaf supported on the
singular point of $V$. Let
\begin{equation}
G^{(\Lambda^p 1)}(M\backslash A):=\Gamma(M\backslash A,
\Omega^p_M)/\langle\Lambda^p \Gamma(M\backslash A, \Omega^1_M)\rangle,
\end{equation}
then $\dim \mathscr{G}^{(\Lambda^p 1)}_0=\dim G^{(\Lambda^p
1)}(M\backslash A)$.}
\end{lemma}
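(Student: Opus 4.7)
The plan is to verify the three assertions separately: the exact sequence with support at $0$, coherence of $\bar{\bar{\Omega}}_V^{\Lambda^p 1}$, and the stalk identification, using the Stein hypothesis and Cartan's Theorems A and B.

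First, I would identify $\bar{\bar{\Omega}}_V^{\Lambda^p 1}$ canonically as the image of the natural wedge product map
\[
\textstyle\bigwedge^p_{\mathcal{O}_V}\bar{\bar{\Omega}}^1_V\longrightarrow \bar{\bar{\Omega}}^p_V:=\theta_*\Omega^p_{V\setminus\{0\}},
\]
since the sheafification of the presheaf $U\mapsto \langle\Lambda^p\Gamma(U,\bar{\bar{\Omega}}^1_V)\rangle$ is exactly this image subsheaf. On $V\setminus\{0\}$ the sheaves $\Omega^1$ and $\Omega^p$ are locally free of ranks $n$ and $\binom{n}{p}$ and the wedge product is an isomorphism, so the map $\bar{\bar{\Omega}}_V^{\Lambda^p 1}\hookrightarrow \bar{\bar{\Omega}}^p_V$ is injective and its cokernel $\mathscr{G}^{(\Lambda^p 1)}$ vanishes off $0$, hence is supported at the singular point.

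Next, I would establish coherence. Because $V$ has an isolated singularity of dimension $n\ge 2$, the singular locus has codimension at least two, and a Scheja-type extension argument (equivalently, comparison with $\pi_*\Omega^p_M$, which is coherent by Grauert's direct image theorem applied to the proper map $\pi$) shows that $\bar{\bar{\Omega}}^1_V$ and $\bar{\bar{\Omega}}^p_V$ are coherent $\mathcal{O}_V$-modules. Then $\bigwedge^p_{\mathcal{O}_V}\bar{\bar{\Omega}}^1_V$ is coherent, so its image $\bar{\bar{\Omega}}_V^{\Lambda^p 1}$ is a coherent subsheaf of $\bar{\bar{\Omega}}^p_V$, and the cokernel $\mathscr{G}^{(\Lambda^p 1)}$ is coherent as well; being supported at $\{0\}$ it is a finite-dimensional skyscraper whose stalk equals $\Gamma(V,\mathscr{G}^{(\Lambda^p 1)})$.

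Finally, for the stalk computation, I would apply Cartan's Theorem B to the short exact sequence on the Stein space $V$ to obtain
\[
\mathscr{G}^{(\Lambda^p 1)}_0\;=\;\Gamma(V,\mathscr{G}^{(\Lambda^p 1)})\;=\;\Gamma(V,\bar{\bar{\Omega}}^p_V)\big/\Gamma(V,\bar{\bar{\Omega}}_V^{\Lambda^p 1}).
\]
The numerator equals $\Gamma(V\setminus\{0\},\Omega^p)=\Gamma(M\setminus A,\Omega^p_M)$ via the biholomorphism $\pi\colon M\setminus A\to V\setminus\{0\}$. For the denominator, Cartan's Theorem A implies $\bar{\bar{\Omega}}^1_V$ is globally generated over $\mathcal{O}_V$, so $\bar{\bar{\Omega}}_V^{\Lambda^p 1}$ is globally generated by wedge products of global sections of $\bar{\bar{\Omega}}^1_V$; Theorem B then upgrades this local generation to a global one, yielding $\Gamma(V,\bar{\bar{\Omega}}_V^{\Lambda^p 1})=\langle\Lambda^p\Gamma(V,\bar{\bar{\Omega}}^1_V)\rangle=\langle\Lambda^p\Gamma(M\setminus A,\Omega^1_M)\rangle$. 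Taking the quotient gives $G^{(\Lambda^p 1)}(M\setminus A)$, as required.

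The main obstacle I expect is the last step: one must carefully control the passage from the sub\emph{presheaf} generated by wedge products to its sheafification at the level of global sections, since sheafification can in principle enlarge global sections. The Stein hypothesis together with Theorems A and B is exactly what is needed to globalize the stalkwise generation, and without it the identification $\Gamma(V,\bar{\bar{\Omega}}_V^{\Lambda^p 1})=\langle\Lambda^p\Gamma(V,\bar{\bar{\Omega}}^1_V)\rangle$ would fail. A secondary technical point is justifying coherence of $\bar{\bar{\Omega}}^p_V$ without an explicit normality assumption, which is handled by the codimension-two extension property inherent to isolated singularities of dimension $\ge 2$.
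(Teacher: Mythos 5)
Your proposal is correct and follows essentially the same route as the cited proof in [D-G-Y]/[Du-Ya]: realize $\bar{\bar{\Omega}}_V^{\Lambda^p 1}$ as the image of the wedge map into $\bar{\bar{\Omega}}^p_V$, get coherence from coherence of $\bar{\bar{\Omega}}^1_V$ and $\bar{\bar{\Omega}}^p_V$, observe the cokernel is a skyscraper at $0$, and then use Cartan's Theorems A and B on the Stein space $V$ together with the biholomorphism $\pi\colon M\backslash A\to V\backslash\{0\}$ to identify $\mathscr{G}^{(\Lambda^p 1)}_0$ with $G^{(\Lambda^p 1)}(M\backslash A)$. One small caveat: the coherence of $\theta_*\Omega^p_{V\backslash V_{sing}}$ rests on the Siu/Scheja--Trautmann extension theorem across the codimension $\ge 2$ singular point (as in the original papers), not on Grauert's theorem applied to $\pi_*\Omega^p_M$, which is in general a strictly smaller sheaf than $\bar{\bar{\Omega}}^p_V$.
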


Thus, from Lemma \ref{loc inv2}, we can define a local invariant of
a singularity which is independent of resolution.
\begin{definition}
\emph{Let $V$ be an $n$-dimensional Stein space with $0$ as its only
singular point. Let $\pi: (M, A)\rightarrow (V, 0)$ be a resolution
of the singularity with $A$ as exceptional set. Let
\begin{equation}
g^{(\Lambda^p 1)}(0):=\dim \mathscr{G}^{(\Lambda^p 1)}_0=\dim
G^{(\Lambda^p 1)}(M\backslash A).
\end{equation}}
\end{definition}

We will omit $0$ in $g^{(\Lambda^p 1)}(0)$ if there is no confusion
from the context.

Let $\pi: (M, A_1,\cdots, A_k)\rightarrow (V, 0_1,\cdots, 0_k)$ be a
resolution of the singularities with $A_i=\pi^{-1}(0_i)$, $1\le i\le
k$, as exceptional sets. In this case we still let
$$G^{(\Lambda^p
1)}(M\backslash A):=\Gamma(M\backslash A, \Omega^p_M)/\langle\Lambda^p
\Gamma(M\backslash A, \Omega^1_M)\rangle,$$ where $A=\cup_i A_i$.

\begin{definition}
\emph{If $X$ is a compact connected strongly pseudoconvex $CR$
manifold of real dimension $2n-1$, in the boundary of a bounded
strongly pseudoconvex domain $D$ in $\mathbb{C}^N$. Suppose $V$ in
$\mathbb{C}^N$ such that the boundary of $V$ is $X$. Let $\pi: (M,
A_1,\cdots,
A_k)\rightarrow (V, 0_1,\cdots, 0_k)$ be a resolution of
the singularities with $A_i=\pi^{-1}(0_i)$, $1\le i\le k$, as
exceptional sets. Let
\begin{equation}
G^{(\Lambda^p 1)}(M\backslash A):=\Gamma(M\backslash A ,
\Omega^p_M)/\langle\Lambda^p \Gamma(M\backslash A, \Omega^1_M)\rangle
\end{equation}
and
\begin{equation}
G^{(\Lambda^p 1)}(X):=\mathscr{S}^p(X)/\langle\Lambda^p \mathscr{S}^1(X)\rangle,
\end{equation}
where $\mathscr{S}^q$ are holomorphic cross sections of
$\wedge^q(\widehat{T}(X)^*)$. Then we set
\begin{equation}
g^{(\Lambda^p 1)}(M\backslash A):=\dim G^{(\Lambda^p 1)}(M\backslash
A),
\end{equation}
\begin{equation}
g^{(\Lambda^p 1)}(X):=\dim G^{(\Lambda^p 1)}(X).
\end{equation}}
\end{definition}

\begin{remark}
Form the definition (cf. Definition 3.7 and 3.8 in \cite{Du-Ya}), $g^{(\Lambda^2 1)}=g^{(1,1)}$ and
$g^{(\Lambda^2 1)}(X)=g^{(1,1)}(X)$.
\end{remark}

\begin{lemma}(\cite{D-G-Y})\label{boundary}
\emph{Let $X$ be a compact connected strongly pseudoconvex $CR$
manifold of real dimension $2n-1$ which bounds a bounded strongly
pseudoconvex variety $V$ with only isolated singularities
$\{0_1,\cdots, 0_k\}$ in $\mathbb{C}^N$. Let $\pi: (M, A_1,\cdots,
A_k)\rightarrow (V, 0_1,\cdots, 0_k)$ be a resolution of the
singularities with $A_i=\pi^{-1}(0_i)$, $1\le i\le k$, as
exceptional sets. Then $g^{(\Lambda^p 1)}(X)=g^{(\Lambda^p
1)}(M\backslash A)$, where $A=\cup A_i$, $1\le i\le k$.}
\end{lemma}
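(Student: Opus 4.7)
The plan is to construct mutually inverse linear maps between $\Gamma(M\setminus A,\Omega^p_M)$ and $\mathscr{S}^p(X)$ that are compatible with the exterior product, after which the equality of $g^{(\Lambda^p 1)}$ on both sides follows automatically by passing to the quotients that define $G^{(\Lambda^p 1)}(M\setminus A)$ and $G^{(\Lambda^p 1)}(X)$.

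First, since $\pi|_{M\setminus A}\colon M\setminus A\to V\setminus\{0_1,\dots,0_k\}$ is a biholomorphism, I would identify $\Gamma(M\setminus A,\Omega^p_M)$ with the space of holomorphic $p$-forms on the smooth part $V\setminus V_{sing}$; this identification is an algebra map for $\wedge$, so it carries $\langle\Lambda^p\Gamma(M\setminus A,\Omega^1_M)\rangle$ onto the corresponding subspace of decomposable forms on $V\setminus V_{sing}$. Next I would apply boundary regularity for the strongly pseudoconvex variety $V\subset\mathbb{C}^N$: every holomorphic $p$-form on $V\setminus V_{sing}$ extends smoothly up to $X=\partial V$, and its tangential part along $X$ is annihilated by the tangential Cauchy--Riemann operator, hence defines an element of $\mathscr{S}^p(X)$. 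This provides the restriction map. Conversely, for a CR section in $\mathscr{S}^p(X)$, a Kohn--Rossi / Harvey--Lawson type extension (available because $2n-1\ge 5$ and $X$ is strongly pseudoconvex) produces a holomorphic $p$-form on a one-sided collar of $X$ inside $V$, which then propagates to a global holomorphic $p$-form on $V\setminus V_{sing}$ by Hartogs-type extension using the strong pseudoconvexity of interior level sets. Since both constructions are local and commute with $\wedge$, the two maps are inverse algebra isomorphisms; in particular they send the subspace of decomposable forms to the subspace of decomposable forms, and the induced map on quotients gives $G^{(\Lambda^p 1)}(M\setminus A)\cong G^{(\Lambda^p 1)}(X)$, whence the equality of dimensions.

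The main obstacle I expect is the precise identification of holomorphic forms on $V\setminus V_{sing}$ with CR sections of $\wedge^p(\widehat{T}(X)^*)$: one must (i) verify that extending a CR section inward across $X$ produces a form that is genuinely holomorphic on the punctured $V\setminus V_{sing}$, handling each singular point $0_i$ separately, and (ii) confirm that restriction to $X$ actually lands in the CR subspace $\mathscr{S}^p(X)$ rather than in a larger space of smooth tangential sections. Both points have been established in the hypersurface/lower-dimensional analogues developed in \cite{Du-Ya} and \cite{D-G-Y}, and my proposal would be to reduce the present case to those results by working locally near each $0_i$ and globalizing via a partition-of-unity style argument, after which the compatibility with $\wedge$ is a formal check.
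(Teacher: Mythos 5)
The equality asserted in the lemma is an equality of dimensions of the \emph{quotients} $G^{(\Lambda^p 1)}(X)$ and $G^{(\Lambda^p 1)}(M\backslash A)$, and the paper itself gives no proof (it quotes \cite{D-G-Y}); your plan, however, hinges on a claim that is false, namely that ``every holomorphic $p$-form on $V\backslash V_{sing}$ extends smoothly up to $X=\partial V$, hence restricts to an element of $\mathscr{S}^p(X)$.'' Boundary regularity in the Harvey--Lawson/Luk--Yau sense concerns the regularity of the variety $\overline{V}$ near $X$, not the boundary behaviour of arbitrary holomorphic forms on the open part. Already in the simplest admissible instance $X=S^{2n-1}=\partial D\subset\mathbb{C}^n$, $V$ the unit ball, $A=\emptyset$, the form $(z_1-1)^{-1}dz_2$ lies in $\Gamma(M\backslash A,\Omega^1_M)$ but has no continuous extension to $X$. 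Consequently the ``restriction map'' is not defined on all of $\Gamma(M\backslash A,\Omega^p_M)$, the two spaces $\Gamma(M\backslash A,\Omega^p_M)$ and $\mathscr{S}^p(X)$ are genuinely not isomorphic, and the ``mutually inverse algebra isomorphisms'' on which your whole reduction rests cannot exist.

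What survives of your sketch is the one-directional map: by Kohn--Rossi (for $2n-1\ge 5$) or the Harvey--Lawson/Luk--Yau results (needed for $n=2$, which the lemma does not exclude although you do), a section in $\mathscr{S}^p(X)$ extends to a holomorphic $p$-form on a one-sided collar, and then across the interior to all of $M\backslash A$ by a Hartogs-type argument using the strong pseudoconvexity of $V$ and the coherence and depth of $\theta_*\Omega^p_{V\backslash V_{sing}}$ at the isolated singular points; this map is injective and compatible with $\wedge$. The missing content of the lemma is precisely that this map induces an isomorphism on the quotients: one must show (i) surjectivity modulo $\langle\Lambda^p\Gamma(M\backslash A,\Omega^1_M)\rangle$, i.e.\ every interior holomorphic $p$-form agrees, modulo decomposable forms, with one coming from the boundary --- this is where one localizes at the singular points, since by Lemma \ref{loc inv2} the quotient sheaf $\mathscr{G}^{(\Lambda^p 1)}$ is a skyscraper at the $0_i$ --- and (ii) that the kernel of $\mathscr{S}^p(X)\rightarrow G^{(\Lambda^p 1)}(M\backslash A)$ is exactly $\langle\Lambda^p\mathscr{S}^1(X)\rangle$, which is not formal either, because a decomposition of the extended form into wedges of interior holomorphic $1$-forms may involve $1$-forms that do not come from $X$. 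Neither point is addressed in your proposal, and deferring them to \cite{Du-Ya} and \cite{D-G-Y} is circular, since those references establish this very lemma; also note that a ``partition-of-unity style'' globalization is not available for holomorphic objects.
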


 By Lemma \ref{boundary} and the proof of Lemma \ref{loc inv2}, we can get the following lemma easily.

\begin{lemma}(\cite{D-G-Y})\label{gp1}
\emph{Let $X$ be a compact connected strongly pseudoconvex $CR$
manifold of real dimension $2n-1$, which bounds a bounded strongly
pseudoconvex variety $V$ with only isolated singularities
$\{0_1,\cdots, 0_k\}$ in $\mathbb{C}^N$. Then $g^{(\Lambda^p
1)}(X)=\sum_i g^{(\Lambda^p 1)}(0_i)=\sum_i \dim
\mathscr{G}^{(\Lambda^p 1)}_{0_i}$.}
\end{lemma}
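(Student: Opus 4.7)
The plan is to combine Lemma \ref{boundary}, which converts the $CR$ invariant on $X$ into the resolution invariant $g^{(\Lambda^p 1)}(M\setminus A)$, with a globalized version of the short exact sequence proved in Lemma \ref{loc inv2}. After that reduction, the rest is a standard Stein/coherent-sheaf computation.

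First I would observe that the sheaves $\bar{\bar{\Omega}}^p_V$ and $\bar{\bar{\Omega}}^{\Lambda^p 1}_V$ are defined on all of $V$, not just near a single singular point, and the argument of Lemma \ref{loc inv2} produces verbatim a short exact sequence of coherent sheaves
\begin{equation*}
0 \longrightarrow \bar{\bar{\Omega}}^{\Lambda^p 1}_V \longrightarrow \bar{\bar{\Omega}}^p_V \longrightarrow \mathscr{G}^{(\Lambda^p 1)} \longrightarrow 0
\end{equation*}
on $V$, with the quotient sheaf $\mathscr{G}^{(\Lambda^p 1)}$ supported on the finite set $\{0_1,\dots,0_k\}$. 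Taking global sections and using Cartan's Theorem B on the Stein space $V$ to kill $H^1(V, \bar{\bar{\Omega}}^{\Lambda^p 1}_V)$, I obtain
\begin{equation*}
\Gamma(V, \mathscr{G}^{(\Lambda^p 1)}) \cong \Gamma(V, \bar{\bar{\Omega}}^p_V)\big/\Gamma(V, \bar{\bar{\Omega}}^{\Lambda^p 1}_V).
\end{equation*}

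Since $\pi$ is biholomorphic off the exceptional set $A$ and $\bar{\bar{\Omega}}^{\bullet}_V$ is a pushforward from $V\setminus V_{sing}$, the right-hand quotient is precisely $G^{(\Lambda^p 1)}(M\setminus A)$. On the other hand, $\mathscr{G}^{(\Lambda^p 1)}$ is a coherent sheaf with zero-dimensional support, so its global sections split as a direct sum of stalks at the $0_i$; hence $\dim \Gamma(V, \mathscr{G}^{(\Lambda^p 1)}) = \sum_i \dim \mathscr{G}^{(\Lambda^p 1)}_{0_i} = \sum_i g^{(\Lambda^p 1)}(0_i)$ by Definition 3.3. Combining these identifications with Lemma \ref{boundary} yields $g^{(\Lambda^p 1)}(X) = g^{(\Lambda^p 1)}(M\setminus A) = \sum_i g^{(\Lambda^p 1)}(0_i)$.

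The one place that actually needs a little care is identifying the denominator $\Gamma(V, \bar{\bar{\Omega}}^{\Lambda^p 1}_V)$ with $\langle \Lambda^p \Gamma(M\setminus A, \Omega^1_M)\rangle$: a priori this sheafification could have strictly more global sections than the naive presheaf value. This is again where Cartan B pays off, since the coherence of $\bar{\bar{\Omega}}^{\Lambda^p 1}_V$ together with the Steinness of $V$ forces it to be generated by global sections, which are exactly wedge products of global holomorphic $1$-forms on $V\setminus V_{sing}$. This is really the only technical check; the rest of the argument is formal.
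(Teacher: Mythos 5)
Your proposal is correct and follows essentially the same route the paper indicates: it deduces the lemma from Lemma \ref{boundary} together with (a globalized, multi-point version of) the exact sequence and Stein-cohomology argument in the proof of Lemma \ref{loc inv2}, which is exactly how the paper (citing \cite{D-G-Y}) says the result is obtained. The only delicate step you flag --- identifying $\Gamma(V,\bar{\bar{\Omega}}^{\Lambda^p 1}_V)$ with $\langle\Lambda^p\Gamma(M\backslash A,\Omega^1_M)\rangle$ --- is precisely the content of that cited proof, and your Cartan A/B argument handles it in the intended way.
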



The following theorem is the crucial part for solving the classical
complex Plateau problem of real dimension $3$.

\begin{theorem}(\cite{Du-Ya})\label{new inv}
\emph{Let $V$ be a $2$-dimensional Stein space with $0$ as its only
normal singular point with $\mathbb{C}^*$-action. Let $\pi: (M,
A)\rightarrow (V, 0)$ be a minimal good resolution of the
singularity with $A$ as exceptional set, then $g^{(\Lambda^2 1)}\ge
1$.}
\end{theorem}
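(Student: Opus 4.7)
The plan is to lift the $\mathbb{C}^*$-action on $(V,0)$ to the resolution $M$ and to use the induced grading on $\Gamma(M\setminus A,\Omega^p_M)=\Gamma(V\setminus\{0\},\Omega^p_V)$. Letting $w_{\min}$ denote the smallest weight of a nonzero holomorphic 1-form on $V\setminus\{0\}$, and using that the wedge product is weight-additive, no homogeneous wedge $\omega_1\wedge\omega_2$ of weight strictly less than $2w_{\min}$ can be nonzero (with a small refinement when $\dim(\Omega^1)_{w_{\min}}\le 1$). So it suffices to exhibit a nonzero homogeneous 2-form of some weight $w_0<2w_{\min}$.

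First I would show $w_{\min}\ge 1$. In the Pinkham--Demazure--Dolgachev presentation $V=\operatorname{Spec}\bigoplus_{k\ge 0}H^0(C,\mathcal{O}_C(\lfloor kD\rfloor))$ with base curve $C$ and ample $\mathbb{Q}$-divisor $D$, the minimal good resolution $M$ has a star-shaped dual graph with central curve $E_0\cong C$, and any hypothetical weight-$0$ 1-form on $V\setminus\{0\}$ would have to lift the weight-$0$ section of the relative cotangent of the Seifert fibration $V\setminus\{0\}\to C$ (a $dt/t$-type section); this lift is obstructed by an $H^1$-class on $C$ which vanishes precisely when $(V,0)$ is smooth. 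Next, I would exhibit a 2-form of low weight $w_0$ coming from the dualizing module: in the Gorenstein hypersurface case take the Poincar\'e residue $\omega_0=\operatorname{Res}\bigl(dx_1\wedge dx_2\wedge dx_3/f\bigr)$ of weight $w_0=w_1+w_2+w_3-d$; in the general case read off a minimal-weight generator of $\omega_V|_{V\setminus\{0\}}$ as a section of $\omega_C\otimes\mathcal{O}_C(\lfloor k_0 D\rfloor)$ for the smallest $k_0$ for which the global sections are nonzero. In each case a direct computation with the Pinkham--Dolgachev data gives $w_0<2w_{\min}$, whence $\omega_0$ projects to a nonzero class in $G^{(\Lambda^2 1)}(M\setminus A)$ and $g^{(\Lambda^2 1)}\ge 1$ follows.

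The hard part is the uniform verification of the inequality $w_0<2w_{\min}$ across Gorenstein and non-Gorenstein cases. In the Gorenstein hypersurface setting it reduces to the Saito-type estimate $d+\min_i w_i>w_j+w_k$ for isolated weighted homogeneous hypersurface singularities (with the small extra step using $\dim(\Omega^1)_{w_{\min}}\le 1$ to handle borderline equality such as the weights $(1,2,3)$ example), while in the non-Gorenstein case $\omega_V$ is only a reflexive rank-$1$ sheaf and one has to extract its minimal-weight generator from the numerical data $(g(C),\deg D,\text{fractional parts of }D)$ attached to the weighted star-shaped dual graph of $M$. The minimality and goodness of the resolution enter here precisely to keep this combinatorial inequality in a manageable range.
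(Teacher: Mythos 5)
Your strategy hinges on two claims that are false in general, so there is a genuine gap rather than a deferred technicality. First, the claim $w_{\min}\ge 1$ fails whenever the base curve $C$ in the Pinkham--Demazure--Dolgachev presentation has positive genus: the projection $V\setminus\{0\}\to C$ is holomorphic, so the pullback of any global holomorphic $1$-form on $C$ is a nonzero holomorphic $1$-form on $V\setminus\{0\}$ of weight $0$ (these are not of the ``$dt/t$-type'' you consider, so your $H^1$-obstruction argument does not see them). Second, the key inequality $w_0<2w_{\min}$ then collapses: for the simple elliptic singularity $x^3+y^3+z^3=0$ the residue $2$-form has weight $w_0=1+1+1-3=0$ while $w_{\min}=0$, so $w_0<2w_{\min}$ is false. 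Your fallback ``$\dim(\Omega^1)_{w_{\min}}\le 1$'' rescues only the genus-one case; for $g(C)\ge 2$ the weight-$0$ piece has dimension $g\ge 2$, and moreover negative-weight holomorphic $1$-forms can occur (sections of $\Omega^1_C\otimes\mathcal{O}_C(-\lfloor kD\rfloor)$ for small $k>0$, e.g.\ $C$ of genus $2$ with $D$ a Weierstrass point), so a weight-$0$ or low-weight $2$-form could a priori be a sum of wedges of forms of weights $-k$ and $+k$; pure weight-additivity cannot exclude this. Thus the ``hard part'' you postpone is not merely hard -- the inequality you plan to verify is simply not true across the relevant cases, and the argument needs a different mechanism (e.g.\ tracking vanishing orders along the central curve, or using that weight-$0$ forms are pullbacks from a curve so their wedges vanish, together with a separate treatment of negative weights).

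For comparison, the present paper does not reprove this statement; it quotes it from \cite{Du-Ya}. Both the proof there and the higher-dimensional analogue proved here (Theorem \ref{newn}) do exploit extra geometric structure of a distinguished exceptional divisor on the (minimal good, star-shaped) resolution -- estimating the vanishing order of wedge products $\omega_1\wedge\omega_2$ along that divisor and exhibiting a $2$-form (resp.\ $n$-form) whose vanishing order is too small -- rather than a single global weight inequality of the form $w_0<2w_{\min}$. Reworking your grading idea into an argument localized along the central curve of the star-shaped resolution (which is exactly where the hypothesis ``minimal good resolution'' enters the statement) would bring it in line with an argument that can actually close.
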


\begin{remark}
We also show that $g^{(\Lambda^2 1)}$ is strictly positive for
rational singularities (\cite{Du-Ga}) and minimal elliptic
singularities (\cite{Du-Ga2}) and exact $1$ for rational double
points, triple points and quotient singularities (\cite{Du-Lu-Ya}).
\end{remark}
Similarly, the following two theorems are the crucial results for solving the
classical complex Plateau problem of real dimension $5$.
\begin{theorem}(\cite{D-G-Y})\label{newnnr}
\emph{Let $V$ be an $n$-dimensional Stein space with $0$ as its only
non-rational singular point, where $n>2$, then $g^{(\Lambda^n 1)}\ge
1$.}
\end{theorem}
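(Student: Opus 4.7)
The plan is to show that the cokernel sheaf $\mathscr{G}^{(\Lambda^n 1)}$ from Lemma~\ref{loc inv2} has non-zero stalk at $0$, which by definition gives $g^{(\Lambda^n 1)}(0)\ge 1$. The strategy is to factor the wedge map $\bar{\bar{\Omega}}^{\Lambda^n 1}_V \to \bar{\bar{\Omega}}^n_V$ through the direct image $\pi_*\omega_M$, and then exploit non-rationality to force $\pi_*\omega_M$ to be a proper subsheaf of $\bar{\bar{\Omega}}^n_V$ at $0$.

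First I would identify $\bar{\bar{\Omega}}^n_V$ with the reflexive dualizing sheaf $\omega_V$ of the normal Stein variety $V$; this is essentially built into the definition of $\bar{\bar{\Omega}}^n_V$ as a push-forward from the smooth locus. Invoking Grauert--Riemenschneider vanishing together with Kempf's criterion (rationality in the Cohen--Macaulay setting being equivalent to $\pi_*\omega_M=\omega_V$), non-rationality yields a strict inclusion $\pi_*\omega_M\subsetneq \bar{\bar{\Omega}}^n_V$, so the quotient has non-zero stalk at $0$.

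The core step is to show that $\bar{\bar{\Omega}}^{\Lambda^n 1}_V$ is contained in $\pi_*\omega_M$. Concretely, for any open $U\ni 0$ and any 1-forms $\alpha_1,\ldots,\alpha_n\in\bar{\bar{\Omega}}^1_V(U)$, the wedge $\alpha_1\wedge\cdots\wedge\alpha_n$, a priori defined only on $U\setminus\{0\}\simeq\pi^{-1}(U)\setminus A$, must be shown to extend holomorphically to all of $\pi^{-1}(U)$. Since $V$ is normal with a single isolated singular point of codimension $n\ge 3$, depth / $S_2$ considerations promote the reflexive 1-forms in $\bar{\bar{\Omega}}^1_V$ to sections whose natural pullback to $M$ lies in $\Omega^1_M$, giving holomorphic 1-forms on all of $M$; their wedges then lie in $\omega_M$ and hence in $\pi_*\omega_M$. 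Combined with the previous paragraph, the image of the wedge map lies inside a proper subsheaf of $\bar{\bar{\Omega}}^n_V$, so $\mathscr{G}^{(\Lambda^n 1)}_0\ne 0$ and $g^{(\Lambda^n 1)}\ge 1$.

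The main obstacle is this extension step: because $A$ is a divisor in $M$, 1-forms on $M\setminus A$ do not extend across $A$ by any Hartogs-type theorem, so one must genuinely exploit the high codimension ($\ge 3$) of the singular point in $V$. Making the pullback construction rigorous for an arbitrary normal isolated singularity, and in particular handling the non-Cohen--Macaulay case where Kempf's criterion must be replaced by a more refined statement relating $R^{n-1}\pi_*\mathcal{O}_M$ to the dualizing quotient, constitutes the technical heart of the argument; the remaining pieces then reduce to standard applications of Grauert--Riemenschneider vanishing and local duality.
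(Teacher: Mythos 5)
There is no proof of Theorem \ref{newnnr} in this paper to compare against (it is quoted from \cite{D-G-Y}), so I judge your outline on its own terms. Half of it is fine once properly sourced: the claim that reflexive $1$-forms pull back to honest holomorphic $1$-forms on $M$ is not a consequence of ``depth/$S_2$ considerations'' --- as you yourself note, $A$ is a divisor in $M$, so no Hartogs-type argument applies --- but it is exactly Flenner's extension theorem ($\pi_*\Omega^p_M=\bar{\bar{\Omega}}^p_V$ for $p\le \operatorname{codim}(V_{\mathrm{sing}})-2$, hence for $p=1$ when $n\ge 3$). With that citation you do get $\Gamma(M\setminus A,\Omega^1_M)=\Gamma(M,\Omega^1_M)$, so all $n$-fold wedges land in $\Gamma(M,\omega_M)$, i.e.\ $\bar{\bar{\Omega}}^{\Lambda^n 1}_{V}\subseteq\pi_*\omega_M$.

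The genuine gap is the other half: ``non-rationality yields $\pi_*\omega_M\subsetneq\bar{\bar{\Omega}}^n_V$.'' By Grauert--Riemenschneider vanishing and the standard duality for isolated singularities, $\dim\bigl(\bar{\bar{\Omega}}^n_V/\pi_*\omega_M\bigr)_0=\dim\bigl(R^{n-1}\pi_*\mathcal{O}_M\bigr)_0$, so your strict inclusion is equivalent to $R^{n-1}\pi_*\mathcal{O}_M\neq 0$, whereas non-rationality only says that \emph{some} $R^i\pi_*\mathcal{O}_M\neq 0$ with $1\le i\le n-1$. For non-Cohen--Macaulay normal isolated singularities the top direct image can vanish: take the affine cone over a ruled surface $Z$ over an elliptic curve with a sufficiently ample polarization $L$; then $R^1\pi_*\mathcal{O}_M\supseteq H^1(\mathcal{O}_Z)\neq 0$ while $R^2\pi_*\mathcal{O}_M=\bigoplus_{k\ge 0}H^2(Z,L^k)=0$, so this is a normal, non-rational, isolated $3$-fold singularity with $\pi_*\omega_M=\bar{\bar{\Omega}}^3_V$, and your chain of inclusions produces nothing. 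Thus the non-CM case that you defer as ``the technical heart'' is not a refinement to be supplied later; it is where this route breaks down. Your argument does prove the statement whenever $R^{n-1}\pi_*\mathcal{O}_M\neq 0$ --- in particular for Cohen--Macaulay singularities, which is all this paper actually needs, since Theorem \ref{newn} applies the result to local complete intersections --- but for the theorem as stated one must extract an $n$-form not in the span of wedges from the nonvanishing of an \emph{intermediate} $R^i\pi_*\mathcal{O}_M$, which requires an idea not present in your proposal.
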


\begin{theorem}(\cite{D-G-Y})\label{new3}
\emph{Let $V$ be a $3$-dimensional Stein space with $0$ as its only
normal Gorenstein singular point, then $g^{(\Lambda^3 1)}\ge 1$.}
\end{theorem}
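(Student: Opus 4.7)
The plan is to exhibit a distinguished exceptional divisor $E$ over $(V,0)$ on which the Gorenstein $n$-form has strictly smaller vanishing order than every $n$-fold wedge of holomorphic $1$-forms, thereby producing a non-zero class in $\mathscr{G}^{(\Lambda^n 1)}_0$. First I would reduce to the rational case: if $(V,0)$ is non-rational, Theorem \ref{newnnr} already gives $g^{(\Lambda^n 1)}\ge 1$. Since $V$ is a local complete intersection it is Gorenstein, and a rational Gorenstein singularity is canonical, so on any log resolution $\pi\colon M\to V$ every exceptional divisor satisfies $a(E)\ge 0$. Let $\omega$ be the Gorenstein generator of $K_V$; the class of $\pi^*\omega\in\Gamma(M\setminus A,\Omega^n_M)$ is the candidate for a non-trivial element of $G^{(\Lambda^n 1)}(M\setminus A)$.

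The central input is Shokurov's minimal log discrepancy conjecture, which for isolated LCI singularities of dimension $n\ge 3$ is established in \cite{E-M}, \cite{E-M-Y}. It supplies a prime exceptional divisor $E$ on some good resolution $\pi\colon M\to V$ with $a(E)\le n-2$. From $K_M=\pi^*K_V+\sum_i a(E_i)E_i$ we obtain $\mathrm{ord}_E(\pi^*\omega)=a(E)\le n-2$.

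The technical heart is the estimate $\mathrm{ord}_E(\pi^*\eta_1\wedge\cdots\wedge\pi^*\eta_n)\ge n-1$ for every $\eta_1,\ldots,\eta_n\in\Gamma(M\setminus A,\Omega^1_M)$. The geometric reason is that $E$ is contracted by $\pi$ to $0\in V$, so at a general point $p\in E$ the differential $d\pi_p$ has rank at most one, and hence the image of $\pi^*\colon\Omega^1_{V,0}\to\Omega^1_{M,p}$ lies inside a rank-one subbundle of $\Omega^1_M|_E$ (a line in the conormal bundle $N^*_{E/M}|_p$). Concretely, after choosing a local generator $\sigma$ of this image, one writes $\pi^*\eta_i=g_i\sigma+\tau_i$ with $g_i\in\mathcal{O}_M$ and $\tau_i\in\mathcal{I}_E\cdot\Omega^1_M$. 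In the $n$-fold wedge $(g_1\sigma+\tau_1)\wedge\cdots\wedge(g_n\sigma+\tau_n)$ any term with two or more factors of $\sigma$ vanishes identically, so every surviving term contains at least $n-1$ factors from $\mathcal{I}_E\cdot\Omega^1_M$, forcing the claimed bound. Combining with the Shokurov estimate yields $\mathrm{ord}_E(\pi^*\omega)\le n-2 < n-1 \le \mathrm{ord}_E(\pi^*\eta_1\wedge\cdots\wedge\pi^*\eta_n)$, so $\pi^*\omega$ cannot lie in the $\mathcal{O}_M$-submodule generated by wedges of $1$-forms, proving $g^{(\Lambda^n 1)}\ge 1$.

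The main obstacle will be to realize, on a single resolution, both the discrepancy bound $a(E)\le n-2$ and the local decomposition $\pi^*\eta_i=g_i\sigma+\tau_i$ along the very same divisor $E$, since a priori the two pieces of information come from different geometric constructions. The LCI case of Shokurov's conjecture in \cite{E-M}, \cite{E-M-Y} is precisely what guarantees existence of such an $E$, and it is what makes the strict inequality $a(E)<n-1$ match the rank-one vanishing produced by the geometric decomposition; without this input the argument does not close, which is why extension beyond the LCI setting remains conditional on Shokurov's conjecture for more general Gorenstein terminal singularities.
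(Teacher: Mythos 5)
Your overall strategy (compare the vanishing order of the Gorenstein $n$-form along a low-discrepancy exceptional divisor with the vanishing order of $n$-fold wedges of $1$-forms, using Shokurov's minimal discrepancy bound) is the same as the paper's, but the technical heart of your argument has a genuine gap. The invariant $g^{(\Lambda^n 1)}$ is computed from $\Gamma(M\setminus A,\Omega^n_M)/\langle\Lambda^n\Gamma(M\setminus A,\Omega^1_M)\rangle$, so the $1$-forms you must control are \emph{arbitrary} holomorphic $1$-forms on $M\setminus A$ (equivalently, reflexive $1$-forms on $V\setminus\{0\}$), not pullbacks of forms regular at $0$. Your rank-one argument --- ``$d\pi_p$ has rank at most one, hence the image of $\pi^*\colon\Omega^1_{V,0}\to\Omega^1_{M,p}$ lies in a line in the conormal bundle'' --- only applies to forms defined at $0\in V$ and pulled back through $d\pi$; a general element of $\Gamma(M\setminus A,\Omega^1_M)$ does not factor through $d\pi$ at points of $E$, and after extending it across $E$ (which itself requires the rationality of the singularity) its restriction to a component $F$ is an arbitrary element of $H^0(F,\Omega^1_F)$, with no a priori reason to vanish. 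This is exactly the step the paper has to work for: canonical $\Rightarrow$ log terminal, Hacon--McKernan gives rational chain connectedness of the exceptional fiber, Koll\'ar's result gives $H^0(E_i,\Omega^1_{E_i})=0$, and then the twisted logarithmic sequence $0\to\Omega^1_N(\log F)(-F)\to\Omega^1_N\to\Omega^1_F\to 0$ forces every holomorphic $1$-form near $F$ into the local shape $f_1dx_1+x_1(f_2dx_2+\cdots+f_ndx_n)$, from which the bound $\mathrm{ord}_F\ge n-1$ for $n$-fold wedges follows. Your proposal has no substitute for this global vanishing input, so the claimed estimate $\mathrm{ord}_E(\eta_1\wedge\cdots\wedge\eta_n)\ge n-1$ is not established for the forms that actually enter the definition of the invariant.

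There is also a hypothesis mismatch: the statement you were asked to prove concerns a $3$-dimensional \emph{normal Gorenstein} singular point, not a local complete intersection, so you cannot justify the discrepancy bound by citing the LCI case of Shokurov's conjecture, nor assert ``since $V$ is a local complete intersection it is Gorenstein.'' For this theorem the relevant input is the $3$-fold case of the minimal discrepancy bound, known through Reid's classification of Gorenstein terminal/canonical $3$-fold singularities combined with Ein--Musta\c{t}\u{a}--Yasuda or Markushevich (this is precisely why the paper's general Theorem 3.15 is stated only for LCI singularities in dimension $n\ge 3$, while the $3$-dimensional Gorenstein case is handled separately). Finally, note that before restricting $1$-forms to $F$ you must know they extend from $M\setminus A$ to $M$; the paper gets this from rationality of the singularity (after reducing to the rational case via Theorem 3.10), and your write-up passes over this point silently.
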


Now we will prove more general result which shows that generalized Yau's conjecture mentioned in Section $1$ is true for isolated local complete intersection singularities of dimension $n\ge 3$. The main idea in the proof is to consider the discrepancy along a special prime exceptional divisor.

\begin{definition}
Suppose that $X$ is a normal variety such that its canonical class
$K_X$ is $\mathbb{Q}$-Cartier, and let $f:Y\rightarrow X$ be a
resolution of the singularities of $X$. Then
\[K_Y=f^*K_X+\sum_ia_iE_i,\]
where the sum is over the irreducible exceptional divisors, and the
$a_i$'s are rational numbers, called the discrepancies.
\end{definition}

\begin{definition}
The minimal discrepancy of a variety $X$ at $0$, denoted by
$Md_0(X)$ (or $Md(X)$ for short), is the minimum of all
discrepancies of discrete valuations of $\mathbb{C}(X)$, whose
center on $X$ is $0$.
\end{definition}
\begin{remark}
The minimal discrepancy only exists when $X$ has log-canonical
singularities (see, e.g. \cite{C-K-M}). Whenever $Md(X)$ exists it
is at least $-1$.
\end{remark}

Shokurov conjecture that the minimal discrepancy is bounded above in
term of the dimension of a variety.

\begin{conjecture}\label{co} (Shokurov \cite{Sh}): The minimal
discrepancy $Md_0(X)$ of a variety $X$ at $0$ of dimension $n$ is at
most $n-1$. Moreover, if $Md_0(X)=n-1$, then $(X, 0)$ is
nonsingular.
\end{conjecture}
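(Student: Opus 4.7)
The plan is to attack Shokurov's conjecture through the jet-scheme framework of Ein and Mustata, which is the only currently known approach that succeeds in a genuinely nontrivial class of singularities, namely local complete intersections (due to Ein, Mustata and Yasuda). After converting to minimal log discrepancies via $mld_0(X) = Md_0(X) + 1$, the two assertions become $mld_0(X) \le n$, and that equality forces smoothness at $0$. The first step is to invoke the Ein--Mustata estimate
\[
mld_0(X) \;\le\; (m+1)n \;-\; \dim \psi_m^{-1}(0), \qquad m \ge 0,
\]
where $\psi_m : X_m \to X$ is the projection from the $m$-jet scheme of $X$ to $X$, and where the infimum over $m$ is attained for LCI germs. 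This reduces the discrepancy bound to a dimension count on the fiber of $\psi_m$ over the singular point.

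Next I would specialize to $m = 1$, for which $\psi_1^{-1}(0)$ is exactly the Zariski tangent space $T_0 X$. The estimate becomes $mld_0(X) \le 2n - \dim T_0 X$. When $X$ is smooth at $0$ we have $\dim T_0 X = n$, hence $mld_0(X) \le n$, and in fact equality holds because blowing up $0$ already produces a divisor of log discrepancy $n$. When $X$ is singular at $0$, the embedding dimension strictly exceeds $n$, so $\dim T_0 X \ge n+1$ and therefore $mld_0(X) \le n - 1$, i.e., $Md_0(X) \le n - 2$. This single dimension count simultaneously establishes $Md_0(X) \le n - 1$ and the characterization of the equality case, at least in the LCI setting where the Ein--Mustata estimate is available.

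The main obstacle is extending the argument beyond local complete intersections. Outside the LCI class, the jet scheme $X_m$ typically carries embedded components and irreducible components of dimension strictly larger than the expected $n(m+1)$, and the clean correspondence between log discrepancies and jet dimensions breaks down: one retains only an inequality going the wrong way for the argument above. A genuine general proof would need to control these pathological components, for example through Nash blowups or a refined study of the structure of the arc space $X_\infty$ at $0$; no such control is presently known, which is precisely why the conjecture remains open outside LCI (and a few special classes such as toric singularities). Accordingly, the present paper invokes the conjecture only in the LCI regime, where it is a theorem of Ein--Mustata--Yasuda and the dimension count sketched above suffices.
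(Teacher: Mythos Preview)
The statement is stated in the paper as a \emph{conjecture}, not a theorem: the paper gives no proof and only records the known cases (surfaces, threefolds via Reid's classification with \cite{E-M-Y} or \cite{Ma}, and local complete intersections via \cite{E-M} and \cite{E-M-Y}). Your proposal is consistent with this: you correctly sketch the Ein--Musta\c{t}\u{a}--Yasuda jet-scheme argument that settles the LCI case and you explicitly acknowledge that the general case remains open, which is precisely the paper's position.
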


The conjecture was confirmed for surfaces (\cite{Al}) and
$3$-dimensional singularities after the explicit classification
(\cite{Re}) of Gorenstein terminal $3$-fold singularities with
\cite{E-M-Y} or \cite{Ma}. If $X$ is a local complete intersection,
then the conjecture also holds (see \cite{E-M} and \cite{E-M-Y}).

%

The following theorem solves generalized Yau's conjecture for isolated local complete intersection singularities and it is also the crucial point for solving the classical Plateua problem in the next section.
\begin{theorem}\label{newn}
\emph{Let $V$ be a $n$-dimensional Stein space with $0$ as its only
isolated local complete intersection singular point, where $n\ge 3$. Then $g^{(\Lambda^n 1)}\ge 1$.}
\end{theorem}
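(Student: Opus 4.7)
The plan is to use Shokurov's minimal discrepancy bound to select a prime exceptional divisor $E$ over $0$ with small discrepancy, and then compare the vanishing orders along $E$ of a Gorenstein generator of $\omega_V$ (pulled back to the resolution) against those of any wedge of $n$ holomorphic $1$-forms; the wedge will vanish strictly more along $E$, so the Gorenstein generator cannot lie in the image of the wedge map, giving $g^{(\Lambda^n 1)}\geq 1$.

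Embed $V=\{f_1=\cdots=f_{N-n}=0\}\subset\mathbb{C}^N$ with ambient coordinates $z_1,\ldots,z_N$ vanishing at $0$. Since $V$ is LCI and hence Gorenstein, near $0$ there is a nowhere-vanishing generator $\omega\in\bar{\bar{\Omega}}^n_V=\omega_V$. The depth lemma applied to the conormal sequence
\[
0\to I/I^2\to\Omega^1_{\mathbb{C}^N}|_V\to\Omega^1_V\to 0,
\]
in which $I/I^2$ and $\Omega^1_{\mathbb{C}^N}|_V$ are locally free of depth $n$, gives $\operatorname{depth}_0\Omega^1_V\geq n-1\geq 2$; so $\Omega^1_V$ is reflexive at $0$ and coincides there with $\bar{\bar{\Omega}}^1_V$. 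In particular every germ of $\bar{\bar{\Omega}}^1_V$ at $0$ is K\"ahler, of the form $\alpha=\sum_j g_j\,dz_j$ with $g_j\in\mathcal{O}_{V,0}$. Since $(V,0)$ is singular, Shokurov's conjecture, known for isolated LCI singularities by \cite{E-M}, \cite{E-M-Y}, produces an exceptional prime divisor $E$ over $0$ with discrepancy $a_E\leq n-2$. Choose a resolution $\pi:(M,A)\to(V,0)$ realizing $E$ as a component of $A$, together with local coordinates $(x_1,\ldots,x_n)$ on $M$ at a generic point of $E$ with $E=\{x_1=0\}$, so that $\pi^*\omega = x_1^{a_E}\,u\,dx_1\wedge\cdots\wedge dx_n$ for some unit $u$ and $v_E(\pi^*\omega)=a_E$.

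At this generic point write $\pi^*z_j = x_1^{m_j}h_j$ with $m_j=v_E(z_j)\geq 1$ and $h_j$ a unit, and set $m:=\min_j m_j\geq 1$. Then $d(\pi^*z_j)=m_j h_j x_1^{m_j-1}\,dx_1 + x_1^{m_j}\,dh_j$, so for any K\"ahler $\alpha=\sum_j g_j\,dz_j$ the $dx_1$-coefficient of $\pi^*\alpha$ has $v_E\geq m-1$ while every $dx_k$-coefficient ($k\geq 2$) has $v_E\geq m$. Expanding a wedge of $n$ such pullbacks as $F\cdot dx_1\wedge\cdots\wedge dx_n$, each monomial in the determinantal expansion of $F$ contains exactly one factor from the $dx_1$-column (contributing $\geq m-1$) and $n-1$ factors from the remaining columns (each contributing $\geq m$), whence $v_E(F)\geq nm-1\geq n-1$. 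Since $n-1>n-2\geq a_E$, no finite sum of such wedges can equal $\pi^*\omega$, so $\pi^*\omega$ represents a nonzero class in $G^{(\Lambda^n 1)}(M\setminus A)$ and $g^{(\Lambda^n 1)}\geq 1$.

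The main obstacle is upgrading the trivial bound $v_E(\text{wedge})\geq a_E$, which follows merely from the fact that a wedge of reflexive $1$-forms is a section of $\omega_V$, to the strict inequality $v_E(\text{wedge})\geq n-1$. This improvement rests on the K\"ahler representation of every germ of $\bar{\bar{\Omega}}^1_V$, which is precisely the point where the LCI hypothesis enters through the depth/reflexivity argument; going beyond LCI would require both an extension of Shokurov's bound and a substitute for this reflexivity statement sufficient to control the $dx_1$-coefficient uniformly.
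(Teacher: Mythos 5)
Your proof is correct, but it reaches the key inequality by a genuinely different mechanism than the paper. Both arguments start the same way: the LCI case of Shokurov's conjecture (\cite{E-M}, \cite{E-M-Y}) supplies a prime divisor $E$ over $0$ with discrepancy at most $n-2$, and the class of the Gorenstein $n$-form is shown to be nonzero in $G^{(\Lambda^n 1)}$ because any wedge of $n$ holomorphic $1$-forms vanishes to order at least $n-1$ along $E$. The difference is how that order bound for wedges is obtained. The paper first reduces to rational singularities via Theorem \ref{newnnr}, notes that rational LCI (hence Gorenstein canonical, hence log terminal) singularities have rationally chain connected exceptional fibre by Hacon--McKernan, deduces $H^0(E_i,\Omega^1_{E_i})=0$ from Koll\'ar, and then uses the logarithmic sequence $0\to\Omega^1_N(\log F)(-F)\to\Omega^1_N\to\Omega^1_F\to 0$ on a tubular neighborhood to force every $1$-form near $F$ into $\Omega^1_N(\log F)(-F)$; it then works with forms on $M$, using rationality at the end to identify the quotient with $\Gamma(M,\Omega^n_M)/\Lambda^n\Gamma(M,\Omega^1_M)$. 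You instead exploit the LCI structure on $V$ itself: the conormal sequence plus the depth lemma gives $\operatorname{depth}_0\Omega^1_V\ge n-1\ge 2$, so every holomorphic $1$-form on $V\setminus\{0\}$ is a restricted K\"ahler differential $\sum_j g_j\,dz_j$, and a direct computation of $d(\pi^*z_j)$ at a generic point of $E$ yields the bound $v_E\ge n-1$. Your route is more elementary and uniform: it needs no case split between rational and non-rational singularities, no Hacon--McKernan or rational chain connectedness, and no extension of forms across the exceptional set. The trade-off, which you correctly flag, is that the LCI hypothesis is used twice (Shokurov and the reflexivity of $\Omega^1_V$), so your argument does not share the feature emphasized in the paper's remark that its proof would extend verbatim to isolated Gorenstein singularities if Shokurov's conjecture were known in that generality, since for general Gorenstein singularities reflexive $1$-forms need not be K\"ahler. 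Two small points worth tightening: the functions $h_j$ in $\pi^*z_j=x_1^{m_j}h_j$ need only be nonvanishing at the chosen generic point (shrink the neighborhood accordingly), and if $(V,0)$ happened to be non-canonical then $a_E$ could be negative, in which case $\pi^*\omega$ has a pole along $E$ and your conclusion holds a fortiori.
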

\begin{proof}
We only need to show that the result holds for rational singularities from Theorem \ref{newnnr}.

Let $\pi: M\rightarrow V$ be a resolution such that $E=\cup E_i$ as the exceptional set of $\pi$, where each $E_i$ is the nonsingular irreducible component of dimension $n-1$ with normal crossings. It is well known that isolated local complete intersection singularities are Gorenstein and rational Gorenstein singularities are canonical (see \cite{Ko-Mo} Corollary 5.24). Since canonical singularities are log terminal, by Hacon and Mckernan's result (cf. Corollary 1.5 in \cite{H-M}), $E$ is rational chain connected. So for each $E_i$, $H^0(E_i, \Omega_{E_i}^1)=0$ (cf. \cite{Ko} Corollary 3.8).


We know that Shokurov's minimal discrepancy conjecture holds for local complete intersection singularities (cf. \cite{E-M}, \cite{E-M-Y}), so there exists $s\in
\Gamma(M, \Omega_M^n)$ such that
Ord$_{F} s\le n-2$, where $F$ is some $E_i$. Take a tubular neighborhood
$N$ of $F$ such that $N\subset M$. Consider the exact sequence
(\cite{E-V})
\begin{equation}\label{logEj}
0\rightarrow \Omega_{N}^{1}(\log F)(-F)\rightarrow
\Omega_{N}^{1}\rightarrow \Omega_{F}^{1}\rightarrow 0.
\end{equation}
By taking global sections we have
\begin{equation}\label{loggEj}
0\rightarrow \Gamma({N}, \Omega_{N}^{1}(\log F)(-F))\rightarrow
\Gamma({N}, \Omega_{N}^{1})\rightarrow \Gamma(F,
\Omega_{F}^{1}).
\end{equation}
Since $H^{0}(F,
\Omega^1_{F})=0$,
\begin{equation}\label{equal}
\Gamma(N, \Omega_{N}^{1}(\log F)(-F))=\Gamma(N,
\Omega_{N}^{1})
\end{equation}
from (\ref{loggEj}).

Suppose $\eta\in \Gamma(N, \Omega_{N}^1)$, then $\eta\in
\Gamma(N, \Omega_{N}^1(\log F)(-F))$ by (\ref{equal}). Chose
a point $P$ in $F$ which is a smooth point in $E$. Let $(x_1, x_2,\cdots
x_{n})$ be a coordinate system center at $P$ such that $F$ is given
locally by $x_1=0$. Write $\eta$ locally around $P$ : $\eta\circeq
f_1dx_1+f_2x_1dx_2+\cdots f_nx_1dx_n$, where $f_1$, $f_2$, $\cdots$, $f_n$ are
holomorphic functions and ``$\circeq$" means local equality around
$P$. So the vanishing order of any elements in $\Lambda^n\Gamma(M,
\Omega_M^1)$ along the irreducible exceptional set $F$ is at least
$n-1$ by noticing $\Gamma(M, \Omega_M^1)\subseteq \Gamma(N,
\Omega_{N}^1)$ under natural restriction. Therefore $s \notin\Lambda^n \Gamma(M,
\Omega_M^1)$. Because the singularity is rational,
\[g^{(\Lambda^n 1)}=\dim\Gamma(M, \Omega_M^n)/\Lambda^n \Gamma(M,
\Omega_M^1)\ge 1.\]
\end{proof}

\begin{remark}
If Shokurov¡¯s minimal discrepancy conjecture were true for isolated Gorenstein terminal singularities, then Theorem \ref{newn} is also true for isolated Gorenstein singularities of dimension
$n\ge 3$. Finally, our main Theorem \ref{nd} holds for Calabi-Yau CR manifold of non-
hypersurface type.
\end{remark}

\section{\textbf{The classical complex Plateau problem}}
In 1981, Yau [Ya] solved the classical complex Plateau problem for
the case $n\ge 3$.
\begin{theorem}([Ya])\label{Yau Pla}
\emph{Let $X$ be a compact connected strongly pseudoconvex $CR$
manifold of real dimension $2n-1$, $n\ge 3$, in the boundary of a
bounded strongly pseudoconvex domain $D$ in $\mathbb{C}^{n+1}$. Then
$X$ is the boundary of a complex sub-manifold $V\subset D-X$ if and
only if Kohn--Rossi cohomology groups $H^{p, q}_{K R}(X)$ are zeros
for $1\le q\le n-2$}
\end{theorem}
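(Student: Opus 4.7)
The plan is to use Harvey and Lawson's existence theorem to produce a complex variety $V\subset \overline{D}$ with $\partial V=X$; since $X$ lies in the boundary of a bounded strongly pseudoconvex domain in $\mathbb{C}^{n+1}$, the singularities of $V$, if any, are isolated hypersurface singularities $0_1,\dots,0_k$. The theorem then reduces to showing that the vanishing of $H^{p,q}_{KR}(X)$ for $1\le q\le n-2$ is equivalent to $V$ being smooth up to the boundary.

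For the \emph{necessity} direction, if $V$ is already a smooth submanifold, then I would identify $H^{p,q}_{KR}(X)$ with a $\bar\partial$-cohomology group of smooth $(p,q)$-forms on $V$ via Tanaka's boundary-values theory, and appeal to Kohn's $\bar\partial$-Neumann regularity together with Andreotti--Grauert vanishing: a strongly pseudoconvex complex manifold of complex dimension $n$ with smooth boundary has vanishing Dolbeault cohomology $H^{p,q}$ for $1\le q\le n-2$. This immediately forces the Kohn--Rossi groups to vanish in the required range.

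For the \emph{sufficiency} direction, I would pass to a resolution $\pi:(M,A_1,\dots,A_k)\to(V,0_1,\dots,0_k)$ and compare the holomorphic Dolbeault data on $M$ with the Kohn--Rossi complex on $X$ through a long exact sequence on punctured neighborhoods of the $0_i$. In the middle range $1\le q\le n-2$, the global $\bar\partial$-cohomology on $M\setminus A$ is controlled by Andreotti--Grauert applied to a pseudoconvex neighborhood, so $H^{p,q}_{KR}(X)$ should split as a direct sum of purely local invariants $\tau^{p,q}(0_i)$ attached to the singular points. For a hypersurface singularity $\{f=0\}\subset(\mathbb{C}^{n+1},0)$, a Koszul-complex computation expresses these local invariants through the Jacobian ideal $J(f)$ and the Tyurina algebra $\mathcal{O}_0/(f,J(f))$; simultaneous vanishing of all $\tau^{p,q}(0_i)$ in the middle range forces the Tyurina number to vanish, which by Mather--Yau (or a direct Nakayama argument) implies that each $0_i$ is a smooth point of $V$.

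The main obstacle I anticipate is the precise identification of $H^{p,q}_{KR}(X)$ with the local Tyurina-type invariants at the singular points. This requires a careful spectral sequence / hypercohomology argument that matches the holomorphic de Rham complex on $M$ with the Kohn--Rossi complex on $X$, and that isolates exactly the ``local'' contribution coming from the singularities while killing the global contribution from the exceptional divisor $A$. Once this dictionary is established, the classical algebraic characterization of smoothness via the vanishing of the Tyurina number closes the argument, and boundary regularity of $V$ follows from the strong pseudoconvexity of $X$ via Kohn's regularity theorem.
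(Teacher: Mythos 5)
You should first note that the paper does not prove this statement at all: Theorem \ref{Yau Pla} is quoted verbatim from Yau's 1981 paper [Ya] as background, so there is no internal argument to compare with. Judged on its own terms, your sketch does follow the broad architecture of Yau's original proof: Harvey--Lawson gives a variety $V$ with $\partial V=X$, the codimension-one situation in $\mathbb{C}^{n+1}$ forces the singularities to be isolated hypersurface singularities, the necessity direction follows from identifying the middle-range Kohn--Rossi groups with Dolbeault-type cohomology of the smooth interior (and since a smooth closed subvariety of the Stein domain $D$ is Stein, Cartan's Theorem B already gives the vanishing; Andreotti--Grauert is not the essential point there), and the sufficiency direction rests on showing that a genuinely singular hypersurface germ forces some $H^{p,q}_{KR}(X)\neq 0$ with $1\le q\le n-2$.

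The genuine gap is that the step you yourself flag as ``the main obstacle'' is the entire content of Yau's theorem: the precise identification of $H^{p,q}_{KR}(X)$, $1\le q\le n-2$, with a direct sum of local analytic invariants attached to the singular points, and the computation showing these invariants are strictly positive for a nonsmooth isolated hypersurface singularity. Your proposal defers this to an unspecified ``spectral sequence / hypercohomology argument,'' so as written it is a plan rather than a proof. Moreover, the local invariants that appear are not literally the Tyurina algebra $\mathcal{O}_0/(f,J(f))$; what Yau extracts is the nonvanishing of certain local cohomological invariants computed from the hypersurface data (this is exactly the reliance on Tyurina-type numbers of hypersurface singularities that the present paper emphasizes as the obstruction to generalizing [Ya] beyond hypersurfaces), and in any case invoking Mather--Yau is superfluous: $\dim\mathcal{O}_0/(f,J(f))=0$ means some partial derivative of $f$ is a unit, so the germ is smooth by the implicit function theorem. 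Incidentally, the strategy of the present paper for its own Main Theorem (Theorem \ref{nd}) is quite different from your route: it bypasses Kohn--Rossi cohomology entirely and instead shows positivity of the single invariant $g^{(\Lambda^n 1)}$ at any nonsmooth local complete intersection point via discrepancies and vanishing orders of holomorphic $1$-forms along an exceptional divisor.
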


When $n=2$, Yau and the author
used $CR$ invariant $g^{(1,1)}(X)$ to give the sufficient and
necessary condition for the variety bounded by a Calabi-Yau CR
manifold $X$ being smooth if $H_h^2{(X)}=0$(\cite{Du-Ya}).

\begin{theorem}(\cite{Du-Ya})
\emph{Let $X$ be a strongly pseudoconvex compact Calabi-Yau $CR$
manifold of dimension $3$. Suppose that $X$ is contained in the
boundary of a strongly pseudoconvex bounded domain $D$ in
$\mathbb{C}^N$ with $H_h^2{(X)}=0$. Then $X$ is the boundary of a
complex sub-manifold (up to normalization) $V\subset D-X$ with
boundary regularity if and only if $g^{(1,1)}(X)=0$.}
\end{theorem}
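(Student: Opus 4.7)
The plan is to reduce the CR-theoretic vanishing $g^{(1,1)}(X)=0$ to a local problem on the singularities of the variety $V$ bounded by $X$, and then invoke strict positivity theorems for the class of surface singularities that arise. First I would apply the Harvey--Lawson theorem to produce the unique complex variety $V\subset\mathbb{C}^N$ with $\partial V=X$. Next, exploiting that $X$ is Calabi--Yau with $H^2_h(X)=0$, the Luk--Yau theorem gives boundary regularity and tells us that the interior singularities of $V$ are isolated Gorenstein surface singularities with vanishing $s$-invariant.

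For the forward implication, if $V$ is a complex sub-manifold up to normalization, I would take $M$ to be the normalization, which is a smooth Stein surface whose exceptional set $A$ over the (necessarily non-normal) singular points of $V$ is a finite codimension-two set; then Hartogs' theorem together with standard Stein-manifold facts forces $G^{(\Lambda^2 1)}(M\setminus A)=0$, and Lemma~\ref{gp1} yields $g^{(1,1)}(X)=0$. Conversely, if $g^{(1,1)}(X)=0$, Lemma~\ref{gp1} combined with the non-negativity of the summands forces every local invariant $g^{(1,1)}(0_i)$ to vanish, so a strict positivity statement for the relevant class of surface singularities would force each $0_i$ to be smooth after normalization, completing the proof.

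The main obstacle is precisely this local positivity step, and my plan is to combine the available tools. Theorem~\ref{new inv} yields $g^{(1,1)}\ge 1$ for normal surface singularities carrying a $\mathbb{C}^*$-action, and the remark following it records strict positivity for rational singularities and for minimal elliptic singularities. By the Laufer classification, a Gorenstein surface singularity with vanishing $s$-invariant is either a rational double point (covered either by the rational case or directly by the $\mathbb{C}^*$-action case) or minimally elliptic, so both branches fall under the cited theorems. The genuine technical difficulty, to be addressed on the minimal good resolution, is to construct an explicit holomorphic $2$-form that cannot be written as an $\mathcal O$-combination of wedges of holomorphic $1$-forms; this local construction is what supplies the strict inequality $g^{(1,1)}>0$ and closes the argument.
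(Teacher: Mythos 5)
Your global reduction (Harvey--Lawson existence, the Luk--Yau structure theorem from \cite{Lu-Ya} giving boundary regularity and isolated Gorenstein singularities with vanishing $s$-invariant, the easy forward direction via Lemma \ref{gp1} and Hartogs/Stein theory, and the reduction of the converse to strict positivity of the local invariants $g^{(1,1)}(0_i)$) is exactly the standard frame and agrees with how \cite{Du-Ya} sets things up. The problem is the step that carries all the weight: your claim that ``by the Laufer classification, a Gorenstein surface singularity with vanishing $s$-invariant is either a rational double point or minimally elliptic.'' That is not what Laufer proved and, as far as I know, is not a theorem at all. Laufer's work classifies and characterizes minimally elliptic singularities (essentially Gorenstein with $p_g=1$); it says nothing about the $s$-invariant, which measures $\Gamma(M\setminus A,\Omega^2)$ modulo forms extending to $M$ plus exact forms $d\Gamma(M\setminus A,\Omega^1)$. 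Vanishing of this quotient imposes no bound on the geometric genus: the exact part $d\Gamma(M\setminus A,\Omega^1)$ can exhaust the $p_g$-dimensional space $\Gamma(M\setminus A,\Omega^2)/\Gamma(M,\Omega^2)$ even when $p_g$ is large (this is typical, e.g., for quasi-homogeneous Gorenstein singularities, where Cartan's formula makes all nonzero-weight $2$-forms exact). So the class of singularities you must handle is not reduced to the two cases covered by \cite{Du-Ga} and \cite{Du-Ga2}, and your argument leaves the essential local positivity unproved for precisely the singularities that can occur. (Note also that if such a classification held, the machinery of \cite{Du-Ya} would be largely unnecessary, which is a further sign the claim is not available in the literature.)

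By contrast, the route taken in \cite{Du-Ya} --- and the one this paper points to by calling Theorem \ref{new inv} ``the crucial part'' for real dimension $3$ --- is to exploit the Gorenstein/vanishing-$s$ structure coming from $H^2_h(X)=0$ so as to bring the singularities within reach of the positivity theorem $g^{(1,1)}\ge 1$ for normal surface singularities with $\mathbb{C}^*$-action, proved there by an explicit analysis of holomorphic $1$- and $2$-forms on the minimal good resolution of quasi-homogeneous singularities. Your last sentence gestures at such a construction (``an explicit holomorphic $2$-form that cannot be written as an $\mathcal{O}$-combination of wedges of $1$-forms''), but in your plan that construction is never supplied: it is exactly the content of Theorem \ref{new inv} and of the rational/minimally elliptic results you cite, and none of these applies to a general Gorenstein singularity with $s=0$ without the missing classification or the missing link to a $\mathbb{C}^*$-action. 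As it stands, the converse direction of your proof has a genuine gap at its central step.
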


\begin{corollary}(\cite{Du-Ya})\label{h2}
\emph{Let $X$ be a strongly pseudoconvex compact $CR$ manifold of
dimension $3$. Suppose that $X$ is contained in the boundary of a
strongly pseudoconvex bounded domain $D$ in $\mathbb{C}^3$ with
$H_h^2{(X)}=0$. Then $X$ is the boundary of a complex sub-manifold
$V\subset D-X$ if and only if $g^{(1,1)}(X)=0$ .}
\end{corollary}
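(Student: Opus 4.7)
The plan is to derive Corollary \ref{h2} as an immediate consequence of Theorem 4.2 by verifying two things: that the Calabi--Yau hypothesis is automatic in this ambient setting, and that ``smoothness up to normalization'' coincides with genuine smoothness for the variety $V$ bounded by $X$ in $\mathbb{C}^3$.

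First I would dispense with the Calabi--Yau assumption. Since $X$ has real dimension $3=2n-1$ with $n=2$ and lies in the boundary of a bounded strongly pseudoconvex domain in $\mathbb{C}^3=\mathbb{C}^{n+1}$, the second item of the Remark following Definition 2.5 applies and shows that $X$ is automatically a Calabi--Yau CR manifold. Together with the explicit hypothesis $H_h^2(X)=0$, this places us inside the hypotheses of Theorem 4.2, which already gives the equivalence between $g^{(1,1)}(X)=0$ and $X$ bounding a complex sub-manifold \emph{up to normalization} $V\subset D-X$ with boundary regularity. Thus the only work is to promote ``smooth up to normalization'' to ``smooth.''

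Next I would exploit that we are in $\mathbb{C}^3$. By Harvey--Lawson, $X$ bounds a unique irreducible complex variety $V\subset\mathbb{C}^3$ of pure complex dimension $2$, which is therefore an analytic hypersurface in $\mathbb{C}^3$. By the Luk--Yau theorem \cite{Lu-Ya} used to establish Theorem 4.2, the hypothesis $H_h^2(X)=0$ further forces the singular locus of $V$ to consist of finitely many isolated Gorenstein surface singularities. But a hypersurface is Cohen--Macaulay, so $V$ satisfies Serre's condition $S_2$; and isolated singularities on a surface give a codimension-two singular locus, so $V$ also satisfies $R_1$. By Serre's normality criterion, $V$ is itself normal, so the normalization map is an isomorphism and smoothness of $V$ up to normalization is simply smoothness of $V$.

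The forward direction---that smooth $V$ forces $g^{(1,1)}(X)=0$---follows by taking $M=V$ with $A=\emptyset$: by Lemma \ref{boundary} it suffices to show $\Gamma(V,\Omega_V^2)=\Lambda^2\Gamma(V,\Omega_V^1)$, which holds on the Stein manifold $V$ because the surjection $\Omega_V^1\wedge\Omega_V^1\twoheadrightarrow\Omega_V^2$ of coherent sheaves gives a surjection on global sections by Cartan's Theorem B. The genuinely hard input is of course Theorem 4.2 itself, which is imported; once that is granted, the real content of the corollary is just the observation that a two-dimensional analytic hypersurface with isolated singularities is automatically normal, so the potential discrepancy between $V$ and its normalization disappears in the hypersurface-type setting of $\mathbb{C}^3$.
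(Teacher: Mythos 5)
Your derivation is correct: the paper itself does not prove Corollary \ref{h2} (it is quoted from \cite{Du-Ya}), and your reduction is exactly the intended one --- the Calabi--Yau hypothesis is automatic for $X$ in the boundary of a strongly pseudoconvex domain in $\mathbb{C}^{n+1}$ (Remark after Definition 2.5), and since the Harvey--Lawson variety in $\mathbb{C}^3$ is a two-dimensional hypersurface whose interior singular set is a compact analytic subset of the Stein domain $D$, hence finite, it is $R_1$ and $S_2$ and therefore normal, so ``smooth up to normalization'' in Theorem 4.2 means smooth. The only cosmetic remark is that the forward direction is obtained in the paper's style directly from Lemma \ref{gp1} (empty set of singular points), which sidesteps the finite-global-generation point implicit in your Cartan~B argument for $\Gamma(V,\Omega_V^2)=\langle\Lambda^2\Gamma(V,\Omega_V^1)\rangle$; that point is standard on a Stein space, so your version is also fine.
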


When $X$ is a Calabi--Yau $CR$ manifold of dimension $5$ ($n=3$), we give
the following necessary and sufficient condition for the variety
bounded by $X$ being smooth in \cite{D-G-Y}.
\begin{theorem}(\cite{D-G-Y})\label{5d}
\emph{Let $X$ be a strongly pseudoconvex compact Calabi-Yau $CR$
manifold of dimension $5$. Suppose that $X$ is contained in the
boundary of a strongly pseudoconvex bounded domain $D$ in
$\mathbb{C}^N$. Then $X$ is the boundary of a complex sub-manifold
(up to normalization) $V\subset D-X$ with boundary regularity if and
only if $g^{(\Lambda^3 1)}(X)=0$.}
\end{theorem}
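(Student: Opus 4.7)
The plan is to reduce the global CR invariant $g^{(\Lambda^3 1)}(X)$ to a sum of local contributions at the singular points of the normalization of the Harvey--Lawson variety, and then use the strict positivity statements from Section 3 to force smoothness when the global invariant vanishes.

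First I would set up the interior. By Harvey--Lawson there is a unique complex variety $V\subset \mathbb{C}^N$ with $\partial V=X$, and since $X$ is a smooth strongly pseudoconvex CR manifold, $V$ is regular in a one-sided neighborhood of $X$. Let $\pi\colon \tilde V\to V$ be the normalization; $\pi$ is an isomorphism near $X$, so $X$ remains the boundary of $\tilde V$, and Lemma \ref{gp1} applies to give
\[
g^{(\Lambda^3 1)}(X)=\sum_i g^{(\Lambda^3 1)}(\tilde 0_i),
\]
where $\{\tilde 0_i\}$ are the (finitely many, isolated) singular points of $\tilde V$. Next I would use the Calabi--Yau hypothesis on $X$, which supplies a nowhere-vanishing holomorphic section of $\wedge^3\widehat{T}(X)^{*}$; a standard extension argument using strong pseudoconvexity (as in \cite{Lu-Ya}) promotes this to a nowhere-vanishing holomorphic $3$-form on $\tilde V\setminus\{\tilde 0_i\}$, so that each $\tilde 0_i$ is a normal Gorenstein three-dimensional singular point.

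The necessity direction is then immediate: if $\tilde V$ is smooth there are no $\tilde 0_i$, whence $g^{(\Lambda^3 1)}(X)=0$. For sufficiency, suppose $g^{(\Lambda^3 1)}(X)=0$. Each summand in the displayed identity is a nonnegative integer, so every $g^{(\Lambda^3 1)}(\tilde 0_i)$ vanishes. Theorem \ref{newnnr} forces each $\tilde 0_i$ to be rational, and Theorem \ref{new3}, applied to the normal Gorenstein three-dimensional singular point $\tilde 0_i$, then yields $g^{(\Lambda^3 1)}(\tilde 0_i)\ge 1$, a contradiction. Thus $\tilde V$ has no singular points, i.e. it is the desired complex submanifold up to normalization; the boundary regularity follows because $\pi$ is biholomorphic near $X$ and the Harvey--Lawson filling meets $X$ smoothly from one side.

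The main obstacle is promoting the Calabi--Yau structure on the CR boundary to the Gorenstein property of the singular points of $\tilde V$, since without this promotion the appeal to Theorem \ref{new3} (which requires a \emph{normal Gorenstein} singular point in dimension three) breaks down. A subsidiary technical point is justifying that normalization does not disturb things near $X$, which is needed in order to invoke Lemma \ref{gp1} at all. Once these two points are secured, the strict positivity theorems of Section 3 do the real work, and the proof reduces to comparing global and local versions of $g^{(\Lambda^3 1)}$.
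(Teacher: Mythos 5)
Your proposal is correct and takes essentially the same route as the proof the paper sketches (quoting \cite{D-G-Y}): reduce $g^{(\Lambda^3 1)}(X)$ to the local invariants at the isolated singular points of the normalized Harvey--Lawson filling via Lemma \ref{gp1}, use the Calabi--Yau hypothesis to make those points normal Gorenstein, and then apply the positivity results to rule out any singularity. Note that your combination of Lemma \ref{gp1} with Theorem \ref{newnnr} is exactly what the paper packages as Theorem \ref{rat}, and Theorem \ref{new3}, which you rightly treat as the decisive ingredient, is precisely where Reid's explicit resolutions of Gorenstein terminal and canonical $3$-fold singularities enter.
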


The main idea of the proof is based on the following theorem and Reid's explicit resolutions of Gorenstein terminal and canonical singularities.

\begin{theorem}(\cite{D-G-Y})\label{rat}
\emph{Let $X$ be a strongly pseudoconvex compact $CR$ manifold of
dimension $2n-1$, where $n>2$. Suppose that $X$ is contained in the
boundary of a strongly pseudoconvex bounded domain $D$ in
$\mathbb{C}^N$. Then $X$ is the boundary of a variety $V\subset D-X$
with boundary regularity and the number of non-rational
singularities is not great than $g^{(\Lambda^n 1)}(X)$. In
particular,  if $g^{(\Lambda^n 1)}(X)=0$, then $V$ has at worst
finite number of rational singularities.}
\end{theorem}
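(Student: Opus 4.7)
The plan is to combine the Harvey--Lawson existence theorem with the local computation of $g^{(\Lambda^n 1)}$ at each singular point, and to exploit the non-rational positivity result, Theorem \ref{newnnr}, that was already established.

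First I would invoke the Harvey--Lawson theorem to produce a unique complex subvariety $V \subset D$ with $\partial V = X$. Boundary regularity near $X$ follows from the strong pseudoconvexity of $\partial D$: since $X$ sits inside the strictly pseudoconvex boundary $\partial D$, $V$ meets $\partial D$ transversally along $X$, and the usual maximum principle-type argument shows $V$ is smooth in a collar neighborhood of $X$. Next I would argue that the singular locus $V_{\text{sing}}$ is a compact complex-analytic subset of $V \setminus X$, hence, being strictly contained in the interior and of positive codimension in $V$, consists of finitely many isolated points $\{0_1,\dots,0_k\}$ (this is the standard fact that any nontrivial positive-dimensional component would violate either compactness or the boundary regularity; note that because $n \ge 3$ we are in sufficiently high dimension that the results of the previous sections, all of which require isolated singularities, apply).

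Having put $V$ in this form, I would then take a resolution $\pi: (M, A_1, \ldots, A_k) \to (V, 0_1, \ldots, 0_k)$ with $A_i = \pi^{-1}(0_i)$ and apply Lemma \ref{boundary} to identify
\begin{equation*}
g^{(\Lambda^n 1)}(X) = g^{(\Lambda^n 1)}(M \setminus A),
\end{equation*}
and then Lemma \ref{gp1} to split this as a sum of local invariants,
\begin{equation*}
g^{(\Lambda^n 1)}(X) = \sum_{i=1}^{k} g^{(\Lambda^n 1)}(0_i).
\end{equation*}
Now Theorem \ref{newnnr} tells us that $g^{(\Lambda^n 1)}(0_i) \ge 1$ whenever $0_i$ is a non-rational singularity. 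Letting $r$ be the number of non-rational singularities among $\{0_1,\dots,0_k\}$, we immediately obtain
\begin{equation*}
r \;\le\; \sum_{i:\, 0_i \text{ non-rational}} g^{(\Lambda^n 1)}(0_i) \;\le\; \sum_{i=1}^{k} g^{(\Lambda^n 1)}(0_i) \;=\; g^{(\Lambda^n 1)}(X),
\end{equation*}
which is the desired bound. The ``in particular'' statement then follows: if $g^{(\Lambda^n 1)}(X) = 0$, then $r = 0$, so every $0_i$ is rational, leaving $V$ with only finitely many rational singularities.

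The main obstacle, in my view, is not the inequality itself (which is essentially a bookkeeping exercise once Theorem \ref{newnnr} is in hand) but rather the preliminary step of verifying that the Harvey--Lawson variety $V$ in fact has only isolated singularities in the interior and regular boundary along $X$. This requires combining strong pseudoconvexity of the ambient domain $D$ with the global structure of $V$, and is the technical reason the hypothesis $n>2$ and the ambient $\mathbb{C}^N$ context are needed. Once that geometric setup is secure, the rest of the argument is a clean assembly of the local-to-global identity from Lemma \ref{gp1} with the positivity statement of Theorem \ref{newnnr}.
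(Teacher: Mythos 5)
Your assembly---Harvey--Lawson existence with boundary regularity, finiteness and isolatedness of the interior singular set, then Lemma \ref{boundary} and Lemma \ref{gp1} to write $g^{(\Lambda^n 1)}(X)=\sum_i g^{(\Lambda^n 1)}(0_i)$, and Theorem \ref{newnnr} to give $g^{(\Lambda^n 1)}(0_i)\ge 1$ at each non-rational point---is correct and is essentially the argument of \cite{D-G-Y}, which this paper quotes without reproducing a proof. The only small imprecision is your justification of finiteness of the singular locus: the clean reason is that it is a compact analytic subset of the Stein domain $D\subset\mathbb{C}^N$ (so the maximum principle forbids positive-dimensional components), not merely that it has positive codimension in $V$.
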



It is a wonderful idea that Yau related complex Plateau problem of hypersurface type to the Kohn-Rossi cohomology groups for $n\ge 3$  in 1981. To determine if $X$ is a boundary of a complex manifold under some conditions, one only needs to calculate a bunch of Kohn-Rossi cohomology groups.
Next theorem shows that we can determine if $X$ is a boundary of a complex manifold by checking only one numerical invariant.
\begin{theorem}\label{nd}
\emph{Let $X$ be a strongly pseudoconvex compact $CR$
manifold of real dimension $2n-1\ge 5$. Suppose that $X$ is contained in the
boundary of a strongly pseudoconvex bounded domain $D$ in
$\mathbb{C}^n$. Then $X$ is the boundary of a complex sub-manifold
 $V\subset D-X$ with boundary regularity if and
only if $g^{(\Lambda^n 1)}(X)=0$.}
\end{theorem}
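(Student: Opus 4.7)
My plan is to combine the fact (Theorem \ref{rat}) that $g^{(\Lambda^n 1)}(X)=0$ forces the interior singularities of the Harvey--Lawson variety to be a finite set of rational singularities with the hypersurface-type observation that each such singularity is then an isolated local complete intersection, and finally apply the strict positivity Theorem \ref{newn} together with the additivity Lemma \ref{gp1}.

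For the forward implication, assume $X$ bounds a complex sub-manifold $V \subset D \setminus X$ with boundary regularity. Then $V$ itself plays the role of a resolution with empty exceptional set, so Lemma \ref{boundary} reduces the computation of $g^{(\Lambda^n 1)}(X)$ to
\[
\dim\, \Gamma(V, \Omega_V^n) \big/ \langle \Lambda^n \Gamma(V, \Omega_V^1) \rangle.
\]
Since $V$ is a Stein manifold of complex dimension $n$, Cartan's Theorem A produces finitely many global holomorphic $1$-forms generating $\Omega_V^1$ pointwise; their $n$-fold wedges then generate $\Omega_V^n$ pointwise. Combined with Cartan's Theorem B, this identifies $\Gamma(V,\Omega_V^n)$ with $\langle \Lambda^n \Gamma(V,\Omega_V^1)\rangle$, and the quotient vanishes, so $g^{(\Lambda^n 1)}(X) = 0$.

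For the reverse implication, assume $g^{(\Lambda^n 1)}(X) = 0$. Harvey--Lawson furnishes a unique complex variety $V \subset D \setminus X$ with $\partial V = X$, and Theorem \ref{rat} (which needs $n\ge 3$, satisfied since $2n-1 \ge 5$) promotes $V$ to have boundary regularity with at most finitely many rational singularities $0_1,\ldots,0_k$. Because $V$ has pure dimension $n$ inside the ambient $\mathbb{C}^{n+1}$, near each $0_i$ it is cut out by a single holomorphic function, so each $0_i$ is an isolated hypersurface singularity, and in particular an isolated local complete intersection singularity. Theorem \ref{newn} then gives $g^{(\Lambda^n 1)}(0_i)\ge 1$ for every $i$, and Lemma \ref{gp1} yields
\[
0 \;=\; g^{(\Lambda^n 1)}(X) \;=\; \sum_{i=1}^{k} g^{(\Lambda^n 1)}(0_i) \;\ge\; k,
\]
forcing $k=0$. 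Thus $V$ is smooth, i.e., a complex sub-manifold of $D \setminus X$ with boundary regularity, as required.

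The substantive work is already packaged inside Theorem \ref{newn} (and through it, the complete-intersection case of Shokurov's minimal discrepancy conjecture). The only point that deserves a moment's articulation is the reduction of the interior singularities to local complete intersections; this is automatic here from the codimension-one ambient hypothesis $D \subset \mathbb{C}^{n+1}$, and is precisely what prevents an unconditional version of this argument in higher embedding codimension. I do not expect any further obstacle: once the hypersurface identification is in place, the rest is a one-line application of the additivity in Lemma \ref{gp1}.
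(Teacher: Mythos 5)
Your proof is correct and follows essentially the same route as the paper: the reverse direction is exactly boundary regularity of the Harvey--Lawson variety plus the hypersurface (hence isolated local complete intersection) nature of its interior singularities, then Theorem \ref{newn} and the additivity in Lemma \ref{gp1} force the singular set to be empty (the paper cites \cite{Lu-Ya}, \cite{Ha-La2} for boundary regularity rather than routing through Theorem \ref{rat}, and reads the ambient space as $\mathbb{C}^{n+1}$ as in Main Theorem 1, just as you do). The forward direction in the paper is the one-line observation that the sum in Lemma \ref{gp1} over singular points is empty; your Cartan A/B argument on the Stein manifold $V$ merely makes that vanishing explicit.
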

\begin{proof}
$(\Rightarrow)$ : Since $V$ is smooth, $g^{(\Lambda^n 1)}(X)=0$ follows from
Lemma \ref{gp1}.

$(\Leftarrow)$ : It is well known that $X$ is the boundary of a
variety $V$ in $D$ with boundary regularity (\cite{Lu-Ya},
\cite{Ha-La2}) and the singularities of $V$ are hypersurface singularities. The the result follows easily from Theorem
\ref{newn} and Lemma \ref{gp1}.
\end{proof}

\begin{remark}
In this case, $g^{(\Lambda^n 1)}(X)=0$ is equivalent to vanishing of Kohn-Rossi cohomologies.
\end{remark}

\section*{Acknowledgements}
The author would like to thank  N. Mok for providing excellent research environment in the University of Hong Kong while part of
this research was done there. He would also like to thank Yifei Chen and Jinsong Xu for helpful discussions.



\begin{thebibliography}{Du}
\bibitem[Al]{Al} Alexeev, V., \emph{Two two-dimensional terminations}, Duke
Math. J. 69 (1993), no. 3, 527-545.

\bibitem[B-S-S]{B-S-S} Brasselet J.-P. , Seade J. and Suwa T. \emph{Vector fields on singular varieties}. Lecture Notes in Mathematics, 1987. Springer-Verlag, Berlin, 2009. xx+225 pp.

\bibitem[C-K-M]{C-K-M} Clemens, H., Koll$\acute{a}$r, J. and Mori,
S., \emph{Higher dimensional complex geometry}, Ast$\acute{e}$risque
166 (1988).

\bibitem[Du-Ga]{Du-Ga} Du, R. and Gao, Y., \emph{New invariants for complex manifolds and rational singularities}, Pacific J. of Math., Vol. 269, No. 1 (2014), 73-97.

\bibitem[Du-Ga2]{Du-Ga2} Du, R. and Gao, Y., \emph{Some remarks on Yau's conjecture and complex Plateau problem},  Methods  Appl. Anal., Vol. 21, No. 3 (2014), 357-364.

\bibitem[D-G-Y]{D-G-Y} Du, R. Gao, Y. and Yau, S., \emph{On higher dimensional complex Plateau problem}, Math. Zeit., Vol. 282 (2016), 389-403.

\bibitem[Du-Lu-Ya]{Du-Lu-Ya} Du, R., Luk, H.S. and Yau, S.S.-T.,
\emph{New invariants for complex manifolds and isolated
singularities}, Comm. Anal. Geom., Vol. 19,
No. 5 (2011), 991-1021.

\bibitem[Du-Ya]{Du-Ya} Du, R. and Yau, S.S.-T.,
\emph{Kohn--Rossi cohomology and its application to the complex
Plateau problem, III}, J. Differential Geom., Vol 90, No.
2 (2012), 251-266.

\bibitem[E-M]{E-M} Ein, L. and Musta$\c{t}\check{a}$, M.,
\emph{Inversion of adjunction for local complete intersection
varieties}, Amer. J. Math. 126 (2004), no. 6, 1355-1365.

\bibitem[E-M-Y]{E-M-Y} Ein, L., Musta$\c{t}\check{a}$, M. and Yasuda, T.,
J\emph{et schemes, log discrepancies and inversion of adjunction},
Invent. Math. 153 (2003), no. 3, 519-535.

\bibitem[E-V]{E-V} Esnault, H. and Viehweg, E., \emph{Lectures on vanishing
theorems}, DMV Seminar, 20. Birkh$\ddot{a}$user Verlag, Basel, 1992.
vi+164 pp.


\bibitem[Ha-La]{Ha-La} Harvey, R. and Lawson, B., \emph{On boundaries of
complex analytic varieties I}, Ann. of Math. 102 (1975), 233-290.

\bibitem[Ha-La2]{Ha-La2} Harvey, R. and Lawson, B., \emph{Addendum to
Theorem 10.4 of [HL]}, arXiv: math/0002195

\bibitem[H-M]{H-M} Hacon, C. D. and Mckernan, J., \emph{On Shokurov's rational connectedness conjecture},
Duke Math. J. 138 (2007), no. 1, 119-136.

\bibitem[Ko]{Ko} Koll\'{a}r, J., Rational curves on algebraic varieties, Ergebnisse der Mathematik
und ihrer Grenzgebiete. 3. Folge. A Series of Modern Surveys in Mathematics,
vol. 32, Springer-Verlag, Berlin, 1996.

\bibitem[Ko-Mo]{Ko-Mo} Koll\'{a}r, J. and Mori, S., \emph{Birational geometry of algebraic varieties}, Cam-
bridge Tracts in Mathematics, vol. 134, Cambridge University Press,
Cambridge, 1998, With the collaboration of C. H. Clemens and A.
Corti, Translated from the 1998 Japanese original.


\bibitem[Lu-Ya]{Lu-Ya} Luk, H.S. and Yau,
S.S.-T.,\emph{Kohn--Rossi cohomology
 and its application to the complex Plateau problem, II}, J.
 Differential Geometry, 77 (2007) 135-148, MR2344356.

\bibitem[Ma]{Ma} Markushevich, D., \emph{Minimal discrepancy for a terminal cDV singularity is
1}, J. Math. Sci. Univ. Tokyo 3 (1996), no. 2, 445-456.

%
\bibitem[Re]{Re} Reid, M., \emph{Minimal models of canonical 3-folds}, Algebraic varieties and analytic varieties (Tokyo, 1981), 131¨C180,
Adv. Stud. Pure Math., vol. 1, North-Holland, Amsterdam, 1983.

\bibitem[S-S1]{S-S1} Seade J. and Suwa T., \emph{A residue formula for the index of a holomorphic flow},
Math. Annalen 304 (1996), 621-634.

\bibitem[S-S2]{S-S2} Seade J. and Suwa T., \emph{Residues and topological invariants of singular holomorphic
foliations}, Internat. J. Math. 8 (1997), 825-847.

\bibitem[Sh]{Sh} Shokurov, V.V., \emph{Problems about Fano varieties}, Birational Geometry
of Algebraic Varieties. Open Problems-Katata 1988, 30-32.

\bibitem[Ta]{Ta} Tanaka, N., \emph{A differentail geometry study on strongly pseudoconvex
manifolds}, Lecture in Mathematics, Kyoto University, 9, Kinokuniya
Bookstroe Co. Ltd, 1975.

\bibitem[Ya]{Ya} Yau, S.S.-T., \emph{Kohn--Rossi cohomology and its application to the complex Plateau
problem, I}, Ann. of Math. 113 (1981) 67-110, MR 0604043.

\end{thebibliography}
\end{document}